\documentclass[12pt]{amsart}
\usepackage{a4wide}
\usepackage[utf8]{inputenc}
\usepackage{amsmath}
\usepackage{amssymb}
\usepackage{amsfonts}
\usepackage{graphicx}
\usepackage{enumerate}
\usepackage{color}
\usepackage{mathtools}
\usepackage{mathrsfs}
\usepackage[colorlinks,linkcolor=blue,anchorcolor=blue,citecolor=blue]{hyperref}

\newtheorem{theorem}{Theorem}[section]
\newtheorem{proposition}[theorem]{Proposition}
\newtheorem{lemma}[theorem]{Lemma}
\newtheorem{definition}[theorem]{Definition}
\newtheorem{corollary}[theorem]{Corollary}
\theoremstyle{definition}
\newtheorem{example}[theorem]{Example}
\theoremstyle{remark}
\newtheorem{remark}[theorem]{Remark}

\numberwithin{equation}{section}

\newcommand{\R}{\mathbb{R}}
\newcommand{\Q}{\mathbb{Q}}

\newcommand{\N}{\mathbb{N}}

\newcommand{\f}{\infty}

\newcommand{\Ga}{\Gamma}
\renewcommand{\t}{\boldsymbol{t}}
\renewcommand{\i}{\mathbf{i}}
\renewcommand{\j}{\mathbf{j}}
\newcommand{\I}{\mathcal{I}}

\title[Self-similarity of unions]{Self-similarity of unions of self-similar sets and their translations}

\author{Zhiqiang Wang}
\address[Zhiqiang Wang]{College of Mathematics and Statistics, Center of Mathematics, Chongqing University, Chongqing 401331, People's Republic of China \;\&\; Department of Mathematics, University of British Columbia, Vancouver, British Columbia, V6T 1Z2, Canada}
\email{zhiqiangwzy@163.com,~zqwangmath@cqu.edu.cn}

\date{\today}
\subjclass[2020]{Primary: 28A80; Secondary: 05C20}

\begin{document}
\begin{abstract}
In this paper, we explore the self-similarity of unions of self-similar sets and their translations.
For $N \in \N$ and $0< \beta < 1/(N+1)$, let $\Gamma$ be the self-similar set generated by the IFS
\[ \Big\{ \phi_i(x)=\beta x + i \frac{1-\beta}{N}: i=0,1,\ldots, N \Big\}. \]
We provide a complete characterization of translation vectors $\boldsymbol{t} =(t_0,t_1, \ldots, t_m) \in \R^{m+1}$ with $0=t_0 < t_1 < \cdots < t_m$ for which the union $\bigcup_{j=0}^m (\Gamma+t_j)$ is a self-similar set, by determining the existence of cycles in
associated directed graphs.
This extends the result of Kong \emph{et al.} \cite{KLWYZ-2024}.
Additionally, we present two types of self-similar sets for which the union with their translations cannot be self-similar.
\end{abstract}
\keywords{self-similar set, iterated function system, union of self-similar sets, directed graph}

\maketitle

\section{Introduction}

Self-similarity, which refers to a set consisting of smaller similar copies of itself, is a central concept in fractal geometry.
A self-similar \emph{iterated function system} (IFS) on $\R^d$ means a finite set of contractive similitudes, i.e., $\mathcal{F}=\{ f_i(x) = \lambda_i O_i x + a_i \}_{i=1}^{\ell}$ where $0< |\lambda_i| < 1$, $O_i$ is an orthogonal real matrix, and $a_i \in \R^d$.
According to Hutchinson \cite{Hutchinson-1981}, any self-similar IFS $\mathcal{F}$ on $\R^d$ has a unique \emph{attractor}, i.e., a unique non-empty compact subset $K \subset \R^d$ satisfying
\begin{equation}\label{eq:attractor}
  K = \bigcup_{i=1}^\ell f_i (K).
\end{equation}
If the union on the right-hand side in (\ref{eq:attractor}) is pairwise disjoint, then we say that the IFS $\mathcal{F}$ satisfies the \emph{strong separation condition} (SSC).
If there exists a non-empty open subset $U \subset \R^d$ such that $\bigcup_{i=1}^\ell f_i(U) \subset U$ with the union on the left-hand side pairwise disjoint, then we say that the IFS $\mathcal{F}$ satisfies the \emph{open set condition} (OSC).
A non-empty compact subset $K \subset \R$ is called a \emph{self-similar set} if it is the attractor of a self-similar IFS.
A self-similar set $K$ is said to satisfy the SSC (resp. OSC) if it is the attractor of a self-similar IFS that satisfies the SSC (resp. OSC).
To avoid triviality, we assume throughout the paper that any self-similar set under consideration is non-singleton.

A natural question is to determine whether a given set is self-similar, which is generally difficult. For example, it is easy to show that a closed ball in $\R^d$ is not self-similar, and that any convex polytope in $\R^d$ is a self-similar set. However, for general convex compact subsets with non-empty interior, no characterization of self-similarity is currently available. Nonetheless, there is some literature that clarifies the self-similarity of certain special sets, such as intersections of Cantor sets \cite{Deng-He-Wen-2008,Kong-Li-Dekking-2010,Zou-Lu-Li-2008,Li-Yao-Zhang-2011,Yao-Li-2012,
Pedersen-Phillips-2014,Baker-Kong-2017}, subsets of the middle-third Cantor set \cite{Feng-Rao-Wang-2015}, graphs of continuous functions \cite{Bandt-Kravchenko-2011,Moreira-Xi-Zhang-2025}, concave quadrangles \cite{Xu-Xi-Jiang-2024} and bounded regions in $\R^2$ with $C^2$ boundary \cite{Wang-Jiang-Xi-2025}.
The union of finitely many intervals that is a self-similar set satisfying the OSC was characterized in \cite{Feng-Hua-Ji-2007,Wen-Zhao-2017}.

In this paper, we study the self-similarity of unions of self-similar sets and their translations in $\R$.
More precisely, for a self-similar set $K$ in $\R$ and a translation vector $\t=(t_0, t_1, \ldots, t_m) \in \R^{m+1}$, where $m \in \N$, we aim to determine whether the union
\[ K_{\t} := \bigcup_{j=0}^m(K+t_j)\] is a self-similar set.
Note that any translation of a self-similar set is again a self-similar set.
Without loss of generality, we assume throughout the paper that the translation vector $\t=(t_0, t_1, \ldots, t_m)\in\R^{m+1}$ always satisfies $0=t_0<t_1<\cdots<t_m$.

\subsection{Homogeneous symmetric self-similar sets}

We first focus on homogeneous symmetric self-similar sets.
Fix a positive integer $N \in \N$ and a real number $0< \beta < 1/(N+1)$. Let $\Ga=\Ga_{\beta, \{ 0,1,\ldots, N\}}$ be the attractor of the self-similar IFS
\begin{equation}\label{eq:IFS-Ga}
   \Big\{ \phi_i(x)=\beta x + i \frac{1-\beta}{N}: i=0,1,\ldots, N \Big\}.
\end{equation}
The set $\Ga$ can be written as
\begin{equation*}
 \Ga=\bigg\{\frac{1-\beta}{N} \sum_{k=1}^\f i_k \beta^{k-1}: i_k\in\{0,1,\ldots, N\}~\forall k \in \N\bigg\}.
\end{equation*}
Note that $\Ga$ is symmetric, i.e., $\Ga=1-\Ga$.
For a translation vector $\t=(t_0, t_1, \ldots, t_m) \in \R^{m+1}$ with $0=t_0<t_1<\cdots<t_m$, define
\begin{equation}\label{eq:Ga-t}
  \Ga_{\t}:=\bigcup_{j=0}^m(\Ga+t_j).
\end{equation}

In \cite{Deng-Liu-2011}, Deng and Liu first provided a complete characterization of the self-similarity of $\Ga \cup (\Ga + t)$ for $N=1$ and $\beta = 1/q$ where $q \ge 3$ is an integer.
In the general setting, Kong \emph{et al.} \cite{KLWYZ-2024} showed that for any $m \in \N$, there are infinite translation vectors $\t \in \R^{m+1}$ such that the union $\Ga_{\t}$ is a self-similar set.
Furthermore, for $0< \beta < 1/(2N+1)$, they gave a complete characterization of the self-similarity of the union $\Ga_{\t}$ by determining whether an associated directed graph contains a cycle.
We remark that the result of Kong \emph{et al.} \cite{KLWYZ-2024} fails to include the case $N=1$ and $\beta=1/3$.
In this paper, we extend the result of Kong \emph{et al.} \cite{KLWYZ-2024} to all cases $0< \beta<1/(N+1)$.

%For a translation vector $\t=(t_0, t_1,\ldots, t_m)\in\R^{m+1}$ we are interested in whether the union
%\[\Ga_{\t}:=\bigcup_{j=0}^m(\Ga+t_j)\] is a self-similar set.
%Without loss of generality, we assume throughout the paper that the translation vector $\t=(t_0, t_1, \ldots, t_m)\in\R^{m+1}$ always satisfies $0=t_0<t_1<\cdots<t_m$.

Let $\Omega:=\{0,1,\ldots,N\}$, and for $\i= i_1 i_2 \ldots i_n \in \Omega^n$ define $\phi_{\i}:=\phi_{i_1} \circ \phi_{i_2} \circ \cdots \circ \phi_{i_n}$.
For $n \in \N$, let $E_n := \big\{ \phi_{\i}(0): \i \in \Omega^n \big\}$, and $T_n := \beta^{-n} E_n$. Define \[ T := \bigcup_{n=1}^\f T_n. \]
Note that the sequence $\{T_n\}_{n=1}^\f$ is increasing with respect to the set inclusion.
For $\t =(t_0, t_1, \ldots, t_m) \in T^{m+1}$ with $0=t_0<t_1<\cdots<t_m$, we can find the smallest integer $\tau_{\t} \in \N$ such that $ t_1, \ldots, t_m \in T_{\tau_{\t}}$.
For $n\ge \tau_{\t}$, define
\begin{align*}
  \Omega_{\t}^n & := \big\{ \i \in \Omega^n:  \phi_{\i}(t_j) \in E_n \;\; \forall 1\le j \le m  \big\}, \\
  \widehat{\Omega}_{\t}^n & := \big\{ \i \in \Omega^n:  \phi_{\i}(-t_j) \in E_n \;\; \forall 1\le j \le m  \big\}.
\end{align*}

\begin{definition}\label{def:admissible-translation}
  A vector $\t=(t_0, t_1 , \ldots , t_m) \in \R^{m+1}$ with $0=t_0 < t_1 < \cdots < t_m$ is an \emph{admissible translation vector} if $\t \in T^{m+1}$, and there exist two finite sets $\I_1\subset \bigcup_{n=\tau_{\t}}^\f \Omega_{\t}^n$ and $\I_2 \subset \bigcup_{n=\tau_{\t}}^\f \widehat{\Omega}_{\t}^n$ such that
  \[ \bigcup_{\i\in \I_1}\phi_{\i}(\Ga_{\t})\cup\bigcup_{\i\in\I_2}\phi_{\i}(1-\Ga_{\t})=\Ga, \]
  where $\Ga$ is the attractor of the self-similar IFS in (\ref{eq:IFS-Ga}) and $\Ga_{\t}$ is the union given in (\ref{eq:Ga-t}).
\end{definition}

For a translation vector $\t=(t_0, t_1,\ldots, t_m)\in\R^{m+1}$ with $0=t_0<t_1<\cdots<t_m$ we define its \emph{conjugate} $\widehat\t=(\widehat t_0, \widehat t_1,\ldots, \widehat t_m)$ by letting $\widehat t_j = t_m - t_{m-j}$ for $0\le j \le m$.
Then we have $0=\widehat t_0 < \widehat t_1 < \cdots < \widehat t_m$.
Note by the symmetry of $\Ga$ that \[ (1+t_m)-\Ga_{\t} =\bigcup_{j=0}^m \big( (1-\Ga)+(t_m-t_j) \big)=\bigcup_{j=0}^m\big( \Ga+\widehat t_j \big)=\Ga_{\widehat\t}. \]
It follows that $\Ga_{\t}$ is a self-similar set if and only if $\Ga_{\widehat\t}$ is a self-similar set.
Based on the definition of admissible translation vectors, we can characterize the self-similarity of the union $\Ga_{\t}$.

\begin{theorem}\label{thm:union-self-similar}
Given $N \in \N$ and $0< \beta < 1/(N+1)$, let $\Ga$ be the attractor of the self-similar IFS in (\ref{eq:IFS-Ga}).
For $\t=(t_0,t_1,\ldots, t_m)\in\R^{m+1}$ with $0=t_0<t_1<\cdots<t_m$, the union $\Ga_{\t}$ defined in (\ref{eq:Ga-t}) is a self-similar set if and only if either $\t$ or its conjugate $\widehat\t$ is an admissible translation vector.
\end{theorem}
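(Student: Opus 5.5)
We prove both implications, starting with sufficiency. Suppose $\t$ is admissible (the conjugate case follows from $\Ga_{\widehat\t}=(1+t_m)-\Ga_{\t}$). The key observation is that for $\i\in\Omega_{\t}^n$ we have $\phi_{\i}(\Ga+t_j)=\phi_{\i}(t_j)-\phi_{\i}(0)+\phi_{\i}(\Ga)$. Since $\phi_{\i}(t_j)\in E_n$, there is $\j\in\Omega^n$ with $\phi_{\i}(t_j)=\phi_{\j}(0)$, so $\phi_{\i}(\Ga+t_j)=\phi_{\j}(\Ga)\subset\Ga$. Hence $\phi_{\i}(\Ga_{\t})\subset\Ga$, and similarly $\phi_{\i}(1-\Ga_{\t})\subset\Ga$ for $\i\in\widehat\Omega^n_{\t}$. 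Admissibility then gives
\[
\Ga=\bigcup_{\i\in\I_1}\phi_{\i}(\Ga_{\t})\cup\bigcup_{\i\in\I_2}\phi_{\i}(1-\Ga_{\t}).
\]
Translating by $t_j$ and taking the union over $j$ yields
\[
\Ga_{\t}=\bigcup_j\bigcup_{\i\in\I_1}(\phi_{\i}+t_j)(\Ga_{\t})\cup\bigcup_j\bigcup_{\i\in\I_2}(\phi_{\i}(1-\cdot)+t_j)(\Ga_{\t}).
\]
This is a finite family of contracting similitudes with attractor $\Ga_{\t}$, so $\Ga_{\t}$ is self-similar.

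For necessity, suppose $\Ga_{\t}=\bigcup_{k=1}^\ell f_k(\Ga_{\t})$ with $f_k(x)=\lambda_k x+a_k$. The plan has three steps.

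First, we use the gap structure. Since $\beta<1/(N+1)$, $\Ga$ satisfies the SSC with gaps of length $\frac{1-\beta}{N}-\beta$ at level one, and the cylinders $\phi_{\i}([0,1])$ are separated. We iterate the $f_k$ so that every piece $f_{\mathbf k}(\Ga_{\t})$ has diameter much smaller than the minimal gap-related scale of $\Ga_{\t}$. We also choose a piece containing the left endpoint $0$ of $\Ga_{\t}$ (or the right endpoint, which leads to the conjugate case), so that it lies inside the leftmost copy $\Ga$ and meets $\Ga$ only near $0$.

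Second, we show $|\lambda_k|$ is a power of $\beta$ and that translations are rigid. This is the main obstacle. The approach is to compare gap-length ratios: the set $f(\Ga_{\t})\subset\Ga$ is contained in a single small cylinder $\phi_{\i}(\Ga)$, chosen with $\i$ maximal. Then $\phi_{\i}^{-1}f(\Ga_{\t})$ is a subset of $\Ga$ that is not contained in any first-level cylinder. Its convex hull therefore contains a first-level gap, and matching the largest gaps forces $\lambda\beta^{-n}=\pm1$; if needed, one further level of comparison handles $\beta^{\text{integer}}$ ambiguities. The case $N=1$, $\beta=1/3$ needs extra care and is where earlier proofs failed. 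Here I would compare the distribution of gap lengths, using that gaps of $\Ga_{\t}$ near its endpoint coincide with gaps of $\Ga$, and exploit that every point of $\phi_{\i}^{-1}f(\Ga_{\t})$ must lie in $\Ga$.

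Third, we show the translations lie in $E_n$. Once $\phi_{\i}^{-1}f$ is $x\mapsto \pm x+c$ with $\pm\Ga_{\t}+c\subset\Ga$, the endpoint analysis gives $c\in\{0,1\}$, so the map is $\mathrm{id}$ or $1-x$. Then $\Ga+t_j\subset\Ga$ after applying $\phi_{\i}$, which forces $\phi_{\i}(t_j)\in E_n$ by the SSC coding: the left endpoint of a copy of $\Ga$ sitting inside $\Ga$ at scale $\beta^n$ must be a level-$n$ cylinder endpoint. This gives $\i\in\Omega^n_{\t}$ or $\widehat\Omega^n_{\t}$, and also $t_j\in T_n$. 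Collecting the finitely many pieces $f_{\mathbf k}(\Ga_{\t})$ that lie in $\Ga$ gives $\I_1,\I_2$ covering $\Ga$ minus possibly some pieces straddling other copies. To handle those, we work with pieces inside $\Ga\setminus\bigcup_{j\ge1}(\Ga+t_j)$ and argue by self-similarity of $\Ga$ that they cover a cylinder of $\Ga$. Mapping this cylinder back by $\phi_{\i}^{-1}$ then produces the required covering of $\Ga$ itself.
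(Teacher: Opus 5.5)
Your sufficiency argument is correct and matches the paper's. The necessity has a genuine gap at the step you yourself flag as the main obstacle. That step is the rigidity fact that every contracting similitude $h$ with $h(\Ga)\subset\Ga$ equals $\phi_{\i}$ or $\phi_{\i}(1-\cdot)$; the paper does not prove it but imports it as Lemma~\ref{lemma:Gamma-self-embedding} from \cite{Wang-2025}. You run the maximal-cylinder/largest-gap comparison on $\phi_{\i}^{-1}f(\Ga_{\t})$, and that cannot work. Since $\mathrm{diam}\,\Ga_{\t}=1+t_m>1$, the inclusion $f(\Ga_{\t})\subset\phi_{\i}(\Ga)$ already forces $|\lambda|<\beta^{n}$, so both $\lambda\beta^{-n}=\pm1$ and your later inclusion $\pm\Ga_{\t}+c\subset\Ga$ are impossible. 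Moreover, the complementary interval of $\phi_{\i}^{-1}f(\Ga_{\t})$ containing a first-level gap of $\Ga$ is the image of a gap of $\Ga_{\t}$, not of $\Ga$, and gaps of $\Ga_{\t}$ have uncontrolled length (e.g.\ $(1,t_1)$ when $t_1>1$, or irregular gaps where translates overlap). Your claim that a copy $\beta^n\Ga+a\subset\Ga$ must have $a\in E_n$ ``by the SSC coding'' is the same rigidity statement again. So, as written, the necessity rests on an unproved lemma, and your hedges (``if needed'', ``extra care'') show the step is not closed.

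The fix is to aim your idea at $\Ga$ itself, i.e.\ at each map $x\mapsto f(x+t_j)$, which sends $\Ga$ into $\Ga$. For contracting $h$ with $h(\Ga)\subset\Ga$, take $\i$ of maximal length with $h(\Ga)\subset\phi_{\i}(\Ga)$, and let $g=\phi_{\i}^{-1}\circ h$ with ratio $\rho$.
\begin{itemize}
\item From $g(\Ga)\subset\Ga$ we get $|\rho|\le1$.
\item By maximality, $g(\Ga)$ meets two first-level cylinders. So its hull contains a first-level gap of length $\gamma=\frac{1-(N+1)\beta}{N}$, which misses $g(\Ga)\subset\Ga$.
\item That gap lies inside $g(H)$ for some gap $H$ of $\Ga$. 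Every gap of $\Ga$ has length $\gamma\beta^k$ with $k\ge0$, so $|\rho|\ge\beta^{-k}\ge1$.
\item Hence $|\rho|=1$, and your endpoint analysis gives $g(x)=x$ or $g(x)=1-x$.
\end{itemize}
This proves the lemma for all $0<\beta<1/(N+1)$, with no extra level and no special case. The case $N=1$, $\beta=1/3$ lies outside \cite{KLWYZ-2024} only because of their hypothesis $\beta<1/(2N+1)$, not because this step is harder there.

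With the lemma in hand, three further points still need to be made precise, as in the paper.
\begin{itemize}
\item Localization. Use $E=\phi_0^n(\Ga)$ with $\beta^n<t_1$; it has positive distance to $\Ga_{\t}\setminus E$. Hence all sufficiently small pieces meeting $E$ lie in $E$, and pulling back by $\phi_0^{-n}$ gives $\Ga=\bigcup_{g\in\mathcal G}g(\Ga_{\t})$, as in Proposition~\ref{prop:union-similarity-general}.
\item The dichotomy $\t\in T^{m+1}$ or $\widehat\t\in T^{m+1}$ comes only from the piece covering $0$. If that piece is $\phi_{\i}(\Ga_{\t})$, then $\i=0^n$ and $\beta^nt_j\in E_n$. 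If it is $\phi_{\i}(1-\Ga_{\t})$, then $\phi_{\i}(-t_m)=0$ and $\beta^n(t_m-t_j)\in E_n$, and you rerun everything for $\Ga_{\widehat\t}$. Other pieces give only $\phi_{\i}(t_j)\in E_n$ resp.\ $\phi_{\i}(-t_j)\in E_n$, not $t_j\in T_n$.
\item Word lengths. Definition~\ref{def:admissible-translation} requires every word in $\I_1\cup\I_2$ to have length $n\ge\tau_{\t}$, which you never check. It follows from $t_{j_0}\ge\frac{1-\beta}{N}\beta^{-\tau_{\t}}$ for some $j_0$, together with $\beta^n t_{j_0}\le1$ and $\beta<1/(N+1)$.
\end{itemize}
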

\begin{remark}
  The proof of Theorem \ref{thm:union-self-similar} relies on a characterization of all contractive similitudes $f$ on $\R$ satisfying $f(\Ga) \subset \Ga$ (see Lemma \ref{lemma:Gamma-self-embedding}). This approach differs from the arguments used in \cite[Theorem 1.5]{KLWYZ-2024} for $0 < \beta < 1/(2N+1)$.
  It is worth mentioning that the definition of admissible translation vectors slightly differs from the one in \cite[Definition 1.2]{KLWYZ-2024}, but they are equivalent for $0< \beta < 1/(2N+1)$.
\end{remark}

Next, we give a more practical criterion for admissible translation vectors via an associated directed graph.
Let $\mathcal{B}_N$ denote the set of $0< \beta < 1/(N+1)$ such that for any $n \in \N$, any $i_1,\ldots, i_n \in \{0,1,\ldots, 2N\}$, and any $j_1,\ldots,j_n \in \{0,1,\ldots,N\}$,
\[ \text{the equality}\;\;\sum_{k=1}^{n} i_k \beta^k = \sum_{k=1}^{n} j_k \beta^k \;\;\text{implies}\;\; i_k = j_k \;\;\forall 1 \le k \le n. \]
It is easy to check that $\big( 0, 1/(2N+1) \big] \subset \mathcal{B}_N$, and all transcendental numbers in $\big( 0, 1/(N+1) \big)$ are contained in $\mathcal{B}_N$.
Note that $\big( 0, 1/(N+1) \big) \setminus \mathcal{B}_N$ is countable and is dense in $\big( 1/(2N+1), 1/(N+1) \big)$.
For $n \in \N$, define $\beta_n$ to be the unique root in $\big( 0, 1/(N+1) \big)$ of the equation \[ N \sum_{k=1}^{n} x^k + 2 N \sum_{k=n+1}^{\f} x^k = 1, \]
i.e., $(N+1)x + N x^{n+1}=1$.
Note that $\beta_n \notin \mathcal{B}_N$ and the sequence $\{\beta_n\}_{n=1}^\f$ is increasing to $1/(N+1)$ as $n \to \f$.
Define the mapping $\vartheta: \big(0, 1/(N+1) \big) \to \N \cup \{0\}$ by
\[
\vartheta(\beta):=
\begin{cases}
  0, & \mbox{if } \beta \in \mathcal{B}_N, \\
  \min\big\{ n \in \N: \beta \le \beta_n \big\}, & \mbox{otherwise}.
\end{cases}
\]

Fix a translation vector $\t =(t_0, t_1, \ldots, t_m) \in T^{m+1}$ with $0=t_0<t_1<\cdots<t_m$.
For $n \ge \tau_{\t}$, since $\big\{ \phi_\i(t_j):\; \i\in\Omega_{\t}^n,\; 0\le j\le m \big\} \subset E_n$ and $\big\{\phi_\i(-t_j): \; \i\in\widehat\Omega_{\t}^n,\; 0\le j\le m \big\} \subset E_n$, there exist two subsets $\mathcal{A}_{\t}^n,\widehat{\mathcal{A}}_{\t}^n \subset \Omega^n$ such that
\begin{align*}
  \big\{ \phi_{\j}(0): \j \in  \mathcal{A}_{\t}^n \big\} & = \big\{ \phi_\i(t_j):\; \i\in\Omega_{\t}^n,\; 0\le j\le m \big\}, \\
  \big\{ \phi_{\j}(0): \j \in \widehat{\mathcal{A}}_{\t}^n \big\} & = \big\{\phi_\i(-t_j): \; \i\in\widehat\Omega_{\t}^n,\; 0\le j\le m \big\},
\end{align*}
and then define
\begin{equation*}%\label{eq:W}
  \mathcal{W}_{\t}^n := \mathcal{A}_{\t}^n \cup \widehat{\mathcal{A}}_{\t}^n.
\end{equation*}
We construct the directed graph $G_{\t}=(V_{\t}, E_{\t})$ as follows:
let $V_{\t} = \Omega^{\tau_{\t}+ \vartheta(\beta)} \setminus W_{\t}^{\tau_{\t}+ \vartheta(\beta)}$;
for $\i=i_1 i_2 \ldots i_{\tau_{\t}+ \vartheta(\beta)},\; \j=j_1 j_2 \ldots j_{\tau_{\t}+ \vartheta(\beta)} \in V_{\t}$, the edge from $\i$ to $\j$ belongs to $E_{\t}$ if and only if $i_2 i_3\ldots i_{\tau_{\t}+ \vartheta(\beta)} = j_1 j_2 \ldots j_{\tau_{\t}+ \vartheta(\beta) -1}$, that is, the suffix of $\i$ coincides with the prefix of $\j$.
Note that the directed graph $G_{\t}$ depends not only on the translation vector $\t$ but also on $\beta$.

A directed graph $G$ is said to contain a \emph{cycle} if there exists a directed path in $G$ starting and ending at the same vertex.
For convenience, the empty graph contains no cycles.

\begin{theorem}\label{thm:graph-criterion}
  A vector $\t =(t_0, t_1, \ldots, t_m) \in T^{m+1}$ with $0=t_0<t_1<\cdots<t_m$ is an admissible translation vector if and only if the directed graph $G_{\t}=(V_{\t}, E_{\t})$ contains no cycles.
\end{theorem}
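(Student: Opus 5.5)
The plan is to translate admissibility into a covering statement about cylinders of $\Ga$, and then to use the graph to see whether that covering can be made finite. The starting observation is that $\Ga_{\t}\subset\Ga\cup(\Ga+t_m)$. For $\i\in\Omega_{\t}^n$, each piece $\phi_{\i}(\Ga+t_j)$ is a level-$n$ cylinder $\phi_{\j}(\Ga)$, because $\phi_{\i}(t_j)=\phi_{\j}(0)$ for some $\j\in\Omega^n$. Hence $\phi_{\i}(\Ga_{\t})$ is exactly the union of the level-$n$ cylinders indexed by the words of $\mathcal{A}_{\t}^n$ generated by $\i$. In the same way, $\phi_{\i}(1-\Ga_{\t})=\bigcup_j\phi_{\i}(\Ga-t_j)$ for $\i\in\widehat\Omega_{\t}^n$ is the union of the cylinders indexed by the corresponding words of $\widehat{\mathcal{A}}_{\t}^n$. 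So $\mathcal{W}_{\t}^n$ is precisely the set of level-$n$ words whose cylinders can be produced at level $n$ by an admissible map. Moreover, every produced cylinder lies inside $\Ga$, provided $\phi_{\i}(\Ga_{\t})\subset\Ga$, which is what the memberships in $E_n$ guarantee.

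Admissibility then becomes the following statement: $\Ga$ is a finite union of cylinders $\phi_{\j}(\Ga)$ with $\j\in\bigcup_n\mathcal{W}_{\t}^n$. The one subtlety is that an admissible image of level $n$ may cover several cylinders at once. I would handle this with a shift-invariance property. If $\j\in\mathcal{W}_{\t}^n$, then $k\j\in\mathcal{W}_{\t}^{n+1}$ for every $k\in\Omega$, since composing with $\phi_k$ preserves the defining conditions. Also, $\j\in\mathcal{W}^n_{\t}$ implies that every extension $\j\ell$ lies in $\mathcal{W}^{n+1}_{\t}$ or is covered by something in $\mathcal{W}$; for this I would use $T_n\subset T_{n+1}$ and $\phi_{\i}=\phi_{\i0}$-type identities at the level of $E$.

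Granting these properties, the set $\mathcal{W}=\bigcup_n\mathcal{W}^n_{\t}$ is determined by its level-$L$ part, with $L=\tau_{\t}+\vartheta(\beta)$. The claim to prove is that a word $\j$ of length $\ge L$ lies in $\mathcal{W}$ if and only if some length-$L$ factor of $\j$ lies in $\mathcal{W}^L_{\t}$. Once this is established, the infinite words avoiding $\mathcal{W}$ are exactly the infinite paths in $G_{\t}$. Since $G_{\t}$ is finite, König's lemma gives the equivalence: $G_{\t}$ has a cycle $\iff$ there is an infinite path $\iff$ some point of $\Ga$ is not covered by any finite set of $\mathcal{W}$-cylinders, i.e.\ $\t$ is not admissible.

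Two issues must be checked in this last equivalence. For the "no cycle $\Rightarrow$ admissible" direction, every sufficiently long word contains a factor in $\mathcal{W}^L$, which yields a finite cover. For the converse, I need that a point coded by a path avoiding $\mathcal{W}$ cannot lie in a $\mathcal{W}$-cylinder through some other coding. This is where overlaps matter. Although $\beta<1/(N+1)$ gives the SSC for $\Ga$ itself, so codings of points of $\Ga$ are unique, the translated centers $\phi_{\i}(t_j)$ land in $E_n$ only through digit sums in $\{0,\dots,2N\}$. That is exactly why membership in $\mathcal{B}_N$ is relevant.

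The main obstacle is proving the factor claim, namely that level $L=\tau_{\t}+\vartheta(\beta)$ suffices. Concretely, the question is why, for $\i$ of length $n>L$ with $\phi_{\i}(t_j)\in E_n$, the condition is already decided by the last $L$ symbols. Writing $\phi_{\i}(t_j)=\phi_{\i}(0)+\beta^n t_j$ with $\beta^{\tau}t_j\in E_{\tau}$, i.e.\ $t_j=\beta^{-\tau}\sum_k c_k\beta^{k}$ up to normalization, membership in $E_n$ amounts to an equality $\sum_k(i_k+c'_k)\beta^k=\sum_k d_k\beta^k$ with digits $i_k+c'_k\in\{0,\dots,2N\}$. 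If $\beta\in\mathcal{B}_N$, digits must match termwise, so carries are impossible and only the last $\tau_{\t}$ positions matter. If $\beta\notin\mathcal{B}_N$, nontrivial identities exist, and I would first try to show they have bounded length: the expansion of $\beta_n$, i.e.\ $(N+1)\beta+N\beta^{n+1}=1$, produces a carry propagating at most $n$ places. Hence, when $\beta\le\beta_{\vartheta(\beta)}$, any identity involves at most $\vartheta(\beta)$ extra positions, giving the window $\tau_{\t}+\vartheta(\beta)$. I would establish this with a comparison-of-tails estimate: two digit expansions that differ first at position $k$ cannot agree unless the tail gains at least $\beta^k$. With digits bounded by $2N$ versus $N$, that fails beyond $\vartheta(\beta)$ steps by the choice of $\beta_n$.
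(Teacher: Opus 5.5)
Your plan follows the paper's route, step by step:
\begin{enumerate}
\item Admissibility is recast as a finite cover of $\Ga$ by cylinders $\phi_{\j}(\Ga)$ with $\j\in\bigcup_{n\ge\tau_{\t}}\mathcal{W}_{\t}^n$ (Proposition \ref{prop:symbol-criterion}).
\item The prefix property $\Omega\times\mathcal{W}_{\t}^n\subset\mathcal{W}_{\t}^{n+1}$ is Lemma \ref{lemma:W-subset}.
\item Your comparison-of-tails estimate is exactly the proof of Lemma \ref{lemma:i-1-equal-j-1}: if $i_1\ne j_1$ then $\beta\notin\mathcal{B}_N$, $j_1=i_1+1$, and the needed gain of $\beta$ forces $\beta>\beta_{n-\tau_{\t}-1}\ge\beta_{\vartheta(\beta)}$.
\item Applying that estimate both to $\j$ versus its witness $\i$, and to $\i$ versus the words witnessing $\i\in\Omega_{\t}^n$ (and symmetrically for $\widehat{\mathcal{A}}_{\t}^n$), yields Proposition \ref{prop:W-equality}.
\end{enumerate}
Two precision points. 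First, the correct form of your factor claim is a \emph{suffix} statement: $\mathcal{W}_{\t}^n=\Omega^{n-L}\times\mathcal{W}_{\t}^{L}$ for $n\ge L:=\tau_{\t}+\vartheta(\beta)$. Second, your ``extension'' property is not needed, because cylinder covers absorb extensions automatically; read as a membership statement $\j\ell\in\mathcal{W}_{\t}^{n+1}$, it is false in general. In Example \ref{example-2:not-in-B}, $000\in\mathcal{W}_{\t}^3$ but $0001\notin\mathcal{W}_{\t}^4$. The direction ``no cycle $\Rightarrow$ admissible'' is fine as you wrote it.

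The step that fails as written is in the other direction. Your criterion only sees $\mathcal{W}$-cylinders of level $\ge L$. A cylinder $\phi_{\j}(\Ga)$ with $\j\in\mathcal{W}_{\t}^k$ and $\tau_{\t}\le k<L$ need not produce any length-$L$ factor in $\mathcal{W}_{\t}^L$. So ``the infinite words avoiding $\mathcal{W}$ are exactly the infinite paths in $G_{\t}$'' is false once $\vartheta(\beta)\ge1$.

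For a concrete case, take Example \ref{example-2:not-in-B}: there $\tau_{\t}=1$, $L=3$, $\mathcal{W}_{\t}^1=\{0,N\}$ and $\mathcal{W}_{\t}^3=\Omega^2\times\{0,N\}$. The sequence $0111\cdots$ traces the infinite path $011\to111\to111\to\cdots$ in $G_{\t}$, yet its point lies in $\phi_0(\Ga)\subset\phi_0(\Ga_{\t})$. The uniqueness-of-coding worry you raise is not the issue, since the SSC settles it; $\mathcal{B}_N$ enters only through the tail lemma.

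The missing step is the reduction the paper makes at the start of its proof: admissibility is equivalent to a cover using levels $\ge L$ only. If $\bigcup_{n=\tau_{\t}}^{\ell}\mathcal{W}_{\t}^n\times\Omega^{\ell-n}=\Omega^{\ell}$, prefix by $\Omega^{\vartheta(\beta)}$ and apply Lemma \ref{lemma:W-subset} to get $\bigcup_{n=L}^{\ell+\vartheta(\beta)}\mathcal{W}_{\t}^n\times\Omega^{\ell+\vartheta(\beta)-n}=\Omega^{\ell+\vartheta(\beta)}$. Alternatively, use the periodic word running around the cycle, which extends backwards along the cycle. A prefix of it lying in some $\mathcal{W}_{\t}^k$ with $k<L$ would then, by Lemma \ref{lemma:W-subset}, put a vertex of the cycle into $\mathcal{W}_{\t}^L$, a contradiction.
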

\begin{remark}
  Compared with \cite[Proposition 1.3]{KLWYZ-2024}, the construction of the directed graph $G_{\t}$ differs slightly, but they coincide for $0 < \beta < 1/(2N+1)$.
  It is necessary to adapt the construction of $G_{\t}$ to ensure that Theorem \ref{thm:graph-criterion} is applicable for all $0 < \beta < 1/(N+1)$.
\end{remark}

By combining Theorem \ref{thm:graph-criterion} with Theorem \ref{thm:union-self-similar}, for any translation vector $\t \in \R^{m+1}$, we can, in theory, determine in finitely many steps whether the union $\Ga_{\t}$ is a self-similar set.
As an application, we obtain the following corollary, which extends \cite[Corollary 1.6]{KLWYZ-2024}.
For $x\in\R$ let $\lfloor x \rfloor$ denote its integer part.

\begin{corollary}\label{cor:m=1}
  Given $N \in \N$ and $\beta \in \mathcal{B}_N$, let $\Ga$ be the attractor of the self-similar IFS in (\ref{eq:IFS-Ga}).
  For $t>0$, the union $\Ga\cup(\Ga+t)$ is a self-similar set if and only if \[t = \frac{i(1-\beta)}{N} \beta^{-k}  \] for some $i \in \big\{ 1, 2, \ldots, \lfloor\frac{N+1}{2}\rfloor \big\}$ and $k\in\N$.
\end{corollary}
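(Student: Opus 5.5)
Since the conjugate of $\t=(0,t)$ is $\widehat\t=(0,t)=\t$ itself, Theorem \ref{thm:union-self-similar} reduces the claim to the following: $\Ga\cup(\Ga+t)$ is self-similar if and only if $\t=(0,t)$ is admissible. By Theorem \ref{thm:graph-criterion}, this holds if and only if $t\in T$ and $G_{\t}$ has no cycle. Because $\beta\in\mathcal B_N$, we have $\vartheta(\beta)=0$, so the vertices of $G_{\t}$ are the words of length $\tau:=\tau_{\t}$ not in $\mathcal W_{\t}^{\tau}$. The plan is to compute $\mathcal W_{\t}^\tau$ explicitly in terms of the digits of $t$, and then decide acyclicity combinatorially.

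\emph{Step 1 (digits of $t$).} If $t\notin T$, then $t$ is not admissible and the union is not self-similar. Note that $t$ is never of the stated form in this case, because $\frac{i(1-\beta)}{N}\beta^{-k}=\beta^{-k}\phi_{i0\cdots0}(0)\in T_k$. So assume $t\in T_\tau$ and write
\[ t=\frac{1-\beta}{N}\sum_{k=1}^{\tau}a_k\beta^{k-1-\tau}, \qquad a_k\in\Omega. \]
Minimality of $\tau$ forces $a_1\ge1$, since a leading zero digit would let us lower $\tau$.

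\emph{Step 2 (the excluded words).} For $\i\in\Omega^\tau$ we have
\[ \phi_{\i}(\pm t)=\frac{1-\beta}{N}\sum_{k=1}^{\tau}(i_k\pm a_k)\beta^{k-1}. \]
Since $\beta\in\mathcal B_N$, sums with digits in $\{0,\dots,2N\}$ have unique representations. Therefore $\phi_{\i}(t)\in E_\tau$ if and only if $i_k+a_k\le N$ for all $k$. For the difference case, I would move the negative terms to the other side, which again gives an identity with digits in $\{0,\dots,2N\}$; hence $\phi_{\i}(-t)\in E_\tau$ if and only if $i_k\ge a_k$ for all $k$. Applying the same reasoning to the images yields
\[ \mathcal W_{\t}^{\tau}=\{\j:\ j_k\le N-a_k\ \forall k\}\cup\{\j:\ j_k\ge a_k\ \forall k\}. \]
Hence $\j\in V_{\t}$ if and only if there exist $k,k'$ with $j_k>N-a_k$ and $j_{k'}<a_{k'}$.

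\emph{Step 3 (acyclicity).} A cycle in $G_{\t}$ is the same as a periodic sequence all of whose length-$\tau$ windows lie in $V_{\t}$. I would split into three cases.

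If $a=(i,0,\dots,0)$, i.e.\ $t=\frac{i(1-\beta)}{N}\beta^{-\tau}$, then the only active coordinate is $k=1$. A vertex would need $N-i<j_1<i$, and no such digit exists when $i\le\lfloor\frac{N+1}2\rfloor$, because then $N-i\ge i-1$. So $V_{\t}=\emptyset$ and there is no cycle.

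If $a=(i,0,\dots,0)$ with $i>\lfloor\frac{N+1}2\rfloor$, then there is a digit $d$ with $N-i<d<i$, and the constant word $d\cdots d$ is a vertex carrying a self-loop.

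If some $a_p\ge1$ with $1<p\le\tau$, I would use the periodic sequence of period $2(p-1)$ consisting of $p-1$ copies of $N$ followed by $p-1$ copies of $0$. In every window, the letters at positions $1$ and $p$ are one $N$ and one $0$. Since $a_1,a_p\ge1$, this gives $N>N-a$ at one of these positions and $0<a$ at the other, so every window is a vertex. The consecutive windows form a cycle.

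These three cases together give exactly the stated characterization, with $k=\tau$.

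The main obstacle I expect is Step 2: carefully justifying the no-carry criterion for both $\pm t$ from the definition of $\mathcal B_N$, including the rearrangement needed in the subtraction case. I also need to confirm that $\mathcal A^\tau_{\t}$ and $\widehat{\mathcal A}^\tau_{\t}$ are exactly the two digit boxes above, so that $\mathcal W^\tau_{\t}$ has the claimed form.
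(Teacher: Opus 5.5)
Your proposal is correct and follows the paper's proof essentially step for step. It uses the same reduction via $\widehat{\t}=\t$ and Theorems \ref{thm:union-self-similar} and \ref{thm:graph-criterion} with $\vartheta(\beta)=0$. It derives the same two-box description of $\mathcal{W}_{\t}^{\tau}$ from the defining property of $\mathcal{B}_N$, including the rearrangement for $-t$, and it uses the same alternating block sequence built from $N^{p-1}$ and $0^{p-1}$ to produce a cycle when some later digit is nonzero. Your split of the paper's Case (i) into two cases and your explicit treatment of $t\notin T$ only make the same argument more explicit.
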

\begin{remark}
Note that $\big( 0, 1/(2N+1) \big] \subset \mathcal{B}_N$.
Thus, Corollary \ref{cor:m=1} fully recovers the result of Deng and Liu in \cite{Deng-Liu-2011}.
It is worth noting that Corollary \ref{cor:m=1} may fail for $\beta \not\in \mathcal{B}_N$ (see Examples \ref{example:not-in-B} and \ref{example-2:not-in-B}).
\end{remark}

\subsection{General self-similar sets}

For general self-similar sets, obtaining a simple criterion for the self-similarity of unions with translations is not straightforward. This primarily depends on the structure of self-similar sets.
If the self-similar set $K$ satisfies the SSC, then we can show that there are at most countable translation vectors $\t \in \R^{m+1}$ such that $K_{\t}$ is a self-similar set.

\begin{theorem}\label{thm:translation-countable}
  Let $K \subset \R$ be a self-similar set that satisfies the SSC.
  Then the set \[ \Big\{ (t_1, \ldots, t_m) \in \R^{m}: 0=t_0 < t_1 < \cdots < t_m, \;\text{and}\;\; \bigcup_{j=0}^m(K+t_j) \;\text{is a self-similar set} \Big\} \]
  is at most countable.
\end{theorem}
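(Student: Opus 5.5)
We will show that every admissible vector $(t_1,\ldots,t_m)$ is determined by a finite amount of discrete data taken from a countable set. Since $K$ satisfies the SSC, we fix an IFS $\{f_i(x)=r_i x+a_i\}_{i=1}^{\ell}$ with attractor $K$ and pairwise disjoint first-level pieces. Let $\delta>0$ be the minimal gap between distinct first-level pieces $f_i(K)$.

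Suppose $K_{\t}=\bigcup_{j=0}^m (K+t_j)$ is self-similar, generated by an IFS $\{g_k\}_{k=1}^{p}$. Since $K_{\t}$ is non-singleton, some $g_k$ has contraction ratio $\rho$ with $0<|\rho|<1$, and $g_k(K_{\t})\subset K_{\t}$. Iterating, we may take a map $g=g_{k}^{n}$ with $|\rho|^n$ as small as we like. Then $g(K)=g(K+t_0)$ is a small similar copy of $K$ lying in $K_{\t}=\bigcup_j(K+t_j)$.

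The key step is a rigidity statement for small similar copies of $K$. If $h$ is a similitude with sufficiently small ratio and $h(K)\subset \bigcup_{j=0}^m (K+t_j)$, we want to conclude the following. Either $h(K)$ lies in a single cylinder set $f_{\i}(K)+t_j$ whose size is comparable to that of $h(K)$, or $h(K)$ meets two translates in a configuration whose relative position is controlled. We expect this to be the main obstacle, because the translates $K+t_j$ may overlap, so SSC for $K$ alone does not rule out $h(K)$ straddling several copies.

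Our first attempt at resolving it is a scale argument. Choose the ratio of $h$ much smaller than $\delta$ and than the minimal positive gap of $K_{\t}$ at a suitable level. Then $h(K)$ sits inside one connected cluster of the finite union of level-$n$ cylinders $\{f_{\i}(K)+t_j\}$. Rescaling that cluster by $f_{\i}^{-1}$ places $h(K)$ inside a set of the form $\bigcup_j (K+s_j)$ with the $s_j$ in a bounded window. Using Baire category, we find a portion of $h(K)$ inside a single rescaled copy $K+s$. We then invoke the standard SSC fact that a similitude mapping an open portion of $K$ into $K$ belongs to a countable set. This fact follows because its ratio and translation are determined by matching finitely many cylinder endpoints, i.e. by pairs of finite words.

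With rigidity in hand, the rescaled relation exhibits $t_j-t_{j'}$ (for the indices $j$ that $h(K)$ actually meets) as an expression built from the countably many maps $f_{\i}$, $f_{\j}^{-1}$ and the fixed data $a_i,r_i$. To reach every $t_j$, we apply the argument not only to $h=g$ but also to $g\circ(\cdot+t_j)$ for each $j$, since $g(K+t_j)\subset K_{\t}$ as well. We also apply it to compositions of the $g_k$. Chaining these relations across the indices, and using $t_0=0$, expresses each $t_j$ in terms of finitely many words, one integer choice of indices, and the ratio $\rho$, which itself lies in the countable multiplicative semigroup generated by the $r_i$. Hence $(t_1,\ldots,t_m)$ lies in a countable set, which proves Theorem~\ref{thm:translation-countable}.
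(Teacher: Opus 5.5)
\textbf{There is a genuine gap in the final step.} Your Baire-category step is sound, and it is all you need in place of the unproved ``rigidity'' dichotomy. The cluster/rescaling discussion can be dropped; for inhomogeneous $K$, rescaling a cluster of level-$n$ cylinders by a single $f_{\i}^{-1}$ does not even produce translates of $K$. Apply Baire directly: if $h(K)\subset\bigcup_j(K+t_j)$, some $K+t_{j}$ contains a relatively open part of $h(K)$, hence some $h(f_{\i}(K))$. So $h=(\psi+t_{j})\circ f_{\i}^{-1}$ with $\psi\in\mathcal S=\{f:\ f(K)\subset K\}$, which is countable by the discreteness result of Elekes--Keleti--M\'ath\'e. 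Your ``matching cylinder endpoints'' is not a proof of this, so cite it.

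Now apply this to $h_k=g(\cdot+t_k)$ for $0\le k\le m$, and use $h_k(0)=g(0)+\sigma t_k$, where $\sigma$ is the ratio of $g$. What you obtain is
\[
\sigma t_k=t_{j(k)}-t_{j(0)}+c_k,\qquad 0\le k\le m,
\]
where $\sigma$ and the $c_k$ range over countable sets and $j\colon\{0,\ldots,m\}\to\{0,\ldots,m\}$ is some map. This is a coupled linear system, not a chain. If $j$ has a cycle (say $j(1)=2$, $j(2)=1$), substitution just returns you to the starting equation. If the system were degenerate, the countable data would only confine $\t$ to a positive-dimensional affine subspace, and countability would not follow. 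So ``chaining these relations \ldots\ expresses each $t_j$'' assumes exactly what has to be proved.

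\textbf{How to close it.} You need that for fixed data $(\sigma,j,(c_k))$ there is at most one solution with $t_0=0$. This is true. Let $u$ be the difference of two solutions, so $\sigma(u_k-u_l)=u_{j(k)}-u_{j(l)}$, and iterating gives $\sigma^n(u_k-u_l)=u_{j^n(k)}-u_{j^n(l)}$. Since $j$ is a self-map of a finite set, there is $n$ with $j^{2n}=j^n$. Because $0<|\sigma|<1$ gives $\sigma^n\neq 1$, this yields $u_{j^n(k)}=u_{j^n(l)}$. Hence $u$ is constant, and $u_0=0$ gives $u=0$.

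\textbf{A further error.} Your claim that $\sigma$ lies in the multiplicative semigroup generated by the $r_i$ is false in general: orientation-reversing self-embeddings, or a coarser generating IFS, give counterexamples. Countability of $\sigma=\rho_\psi/r_{\i}$, where $r_{\i}$ is the ratio of $f_{\i}$, follows from the Baire step instead.

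\textbf{Comparison with the paper.} The paper avoids both the overlap obstacle and the linear system. Proposition~\ref{prop:union-similarity-general} uses the SSC to find a small copy $E=f_1^n(K)$ of $K$ inside $K_{\t}$ at positive distance from $K_{\t}\setminus E$. Sufficiently fine pieces of the IFS of $K_{\t}$ that meet $E$ must lie inside $E$. This produces finitely many similitudes $g$ with $\bigcup_g g(K_{\t})=K$. Each such $g$ maps the \emph{whole union} into $K$ itself, so $g$ and $g(\cdot+t_j)$ both belong to $\mathcal S$ with the same ratio. Then $t_j=\rho_g^{-1}\bigl(g(t_j)-g(0)\bigr)$ lies in a countable set directly. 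With the uniqueness argument added, your route also works, and it only uses a single contraction $g$ with $g(K_{\t})\subset K_{\t}$. As written, however, the decisive step is missing.
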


Next, we present two types of self-similar sets for which the union with their translations cannot be self-similar.

\begin{theorem}\label{thm:not-self-similar}
  {\rm(i)} For $\lambda_0,\lambda_1 >0$ with $\lambda_0+\lambda_1 < 1$, let $K \subset \R$ be the attractor of the self-similar IFS \[ \big\{ \varphi_{0}(x) = \lambda_0 x,\; \varphi_{1}(x) = \lambda_1 x + 1- \lambda_1 \big\}. \]
  If $\log \lambda_0 / \log \lambda_1 \notin \Q$, %then for any $t>0$, the union $K\cup (K+t)$ is not a self-similar set.
  then for any vector $\t=(t_0,t_1,\ldots, t_m)\in\R^{m+1}$ with $0=t_0<t_1<\cdots<t_m$, the union $K_{\t}=\bigcup_{j=0}^m (K+t_j)$ is not a self-similar set.

  {\rm(ii)} For $0< \lambda < 1/3$ and $\lambda < \zeta < 1-2\lambda$, let $K \subset \R$ be the attractor of the self-similar IFS \[ \big\{ \varphi_{0}(x) = \lambda x,\; \varphi_{1}(x) = \lambda x + \zeta,\;\varphi_2(x) = \lambda x + 1- \lambda \big\}. \]
  If $\zeta \not\in \Q(\lambda)$, then for any $t>0$, the union $K\cup (K+t)$ is not a self-similar set.
\end{theorem}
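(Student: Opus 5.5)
Both parts go by contradiction: assume the union is the attractor of a self-similar IFS $\{f_k(x)=r_k x+b_k\}$, compare the two descriptions near the endpoints and near the large gaps of $K$, and derive an algebraic relation that the hypotheses forbid. The basic tool is the structure of similitudes $f$ with $f(K)\subset K$ (resp.\ $f(K_{\t})\subset K_{\t}$) at small scales. Since $K$ satisfies the SSC in both cases, every sufficiently small piece of $K$ is contained in some cylinder $\varphi_{\i}(K)$. The gap structure near a point then forces $f(K)$ to coincide locally with a cylinder. Hence any small piece $f(K)\cap J$ of a self-similar copy must have contraction ratio equal to a product of the $\lambda_i$, up to the finitely many distortions coming from the $m+1$ overlapping translates.

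For (i), I would look at the left endpoint $0$ of $K_{\t}$. Near $0$ only $K$ contributes, because $t_1>0$. So $K_{\t}\cap[0,\varepsilon]=K\cap[0,\varepsilon]$, which is the union of the copies $\lambda_0^n K$ separated by gaps. Some map $f_k$ with $r_k>0$, or with $r_k<0$ after reflecting, must send the leftmost point of $K_{\t}$ to $0$, so $f_k(K_{\t})$ contains a small neighbourhood piece of $0$ in $K$. Matching gap ratios shows $r_k=\lambda_0^{a}\lambda_1^{b}$ for some integers $a,b$. Doing the same at the right endpoint $t_m+1$, where only $K+t_m$ contributes, gives a map with ratio $\lambda_0^{a'}\lambda_1^{b'}$.

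The key step is then the following. Iterating the IFS, the first-level map hitting $0$, say $f_k$, satisfies $f_k^n(K_{\t})\subset K\cap[0,\cdot]$. The self-similar structure of $K$ near $0$ is rigid: $K\cap[0,\delta]$ is invariant exactly under multiplication by $\lambda_0^{\Z}$ within scale. Self-similarity of $K_{\t}$ near $0$ also forces invariance under $x\mapsto r_k x$. By a Hutchinson-type uniqueness of local scaling (the ratios of consecutive gap lengths near $0$ determine the scaling group), $r_k\in\lambda_0^{\N}$. Similarly the pieces of $K_{\t}$ near $t_m+1$ give $\lambda_1^{\N}$. Finally, some cylinder of the new IFS contains both types of endpoint structure: $f_{\mathbf{k}}(K_{\t})$ is a small copy containing a left-end and a right-end neighbourhood. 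Its ratio must then lie in $\lambda_0^{\Q}\cap\lambda_1^{\Q}$, and is nontrivial. That contradicts $\log\lambda_0/\log\lambda_1\notin\Q$. The main obstacle is making the rigidity of the local scaling group rigorous, including excluding maps that land in the overlap region between different translates.

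For (ii), with $K\cup(K+t)$, all contraction ratios are powers of $\lambda$, since here the analogous endpoint argument gives $r_k\in\pm\lambda^{\N}$. Then the translations of the new IFS must carry endpoints of cylinder copies to points of $K\cup(K+t)$. Writing points of $K$ as $\sum d_k\lambda^{k-1}$ with $d_k\in\{0,\zeta,1-\lambda\}$, consider a small copy near $0$ whose image sits inside $K$ and which contains the middle-digit structure. Comparing the position of the $\zeta$-gap with the position of a $t$-shifted copy yields a linear relation $\zeta\,p(\lambda)=q(\lambda)+c\,t$, with $p,q\in\Q[\lambda]$ and $c\in\{0,\pm1\}$. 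Two such relations, one from the left end and one from the right end via $t+1$, eliminate $t$ and give $\zeta\in\Q(\lambda)$ unless $p\equiv 0$. The case $p\equiv0$ is excluded because the copy would then miss the middle branch. This gives a contradiction. The delicate step is exhibiting two independent relations so that $t$ can be eliminated.
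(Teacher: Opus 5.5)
Both parts of your plan have a genuine gap at the step that is supposed to produce the contradiction. Both also rest on an unproved rigidity claim.

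\textbf{Part (i).} Your closing step does not work, because ratio information alone cannot force a copy into $\lambda_0^{\Q}\cap\lambda_1^{\Q}$. Take the map $f_k$ with $f_k(0)=0$ and ratio $\lambda_0^a$. Its right end $\lambda_0^a(1+t_m)$ can perfectly well be the right endpoint of a cylinder $\varphi_{\i}(K)$ for any word $\i$ with exactly $a$ zeros. That is fully consistent with the $\lambda_1$-type scaling at that point. In general the two ends of a copy only place its ratio in $\rho\lambda_0^{\mathbb{Z}}\cap\rho'\lambda_1^{\mathbb{Z}}$, where $\rho,\rho'$ are the ratios of the cylinders sitting at those ends, and this set is typically nonempty.

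The contradiction has to come from \emph{position}, via the translated copy, which your plan never examines. Once $f_k(K_{\t})\subset K$ (take small ratios, or use the reduction $\bigcup_{g\in\mathcal G}g(K_{\t})=K$ of Proposition \ref{prop:union-similarity-general}), the set $f_k(K+t_m)$ is a similar copy of $K$ inside $K$. It has the same ratio $\lambda_0^a$ as $f_k(K)=\varphi_{0^a}(K)$, but its left endpoint is $\lambda_0^a t_m>0$.

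The input you need is the classification that every contractive similitude $g$ with $g(K)\subset K$ equals some cylinder map $\varphi_{\j}$ \cite[Corollary 1.3]{Wang-2025}. Your ``gap matching'' and ``Hutchinson-type uniqueness of local scaling'' are heuristics for this nontrivial theorem, not proofs of it. With the classification, $f_k(\cdot+t_m)=\varphi_{\j}$ with ratio $\lambda_0^a$. Irrationality of $\log\lambda_0/\log\lambda_1$ then forces $\j=0^a$, so $t_m=0$. No right-endpoint analysis is needed.

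\textbf{Part (ii).} Your two relations are the endpoint relations $t=\lambda^{-a}\varphi_{\j}(0)$ and $t=\lambda^{-b}\bigl(1-\varphi_{\i}(0)\bigr)-1$. Their $\zeta$-coefficients are $\lambda^{-a}\sum_{k\in\Lambda_{\j}}\lambda^{k-1}\ge 0$ and $-\lambda^{-b}\sum_{k\in\Lambda_{\i}}\lambda^{k-1}\le 0$. Eliminating $t$ fails exactly when neither word contains the digit $1$, and this case really occurs. For $t=(1-\lambda)/\lambda$, both relations hold with digit-$1$-free words: $\lambda t=\varphi_2(0)$ and $\lambda(1+t)=1-\varphi_0(0)$.

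Your reason for discarding this case (``the copy would miss the middle branch'') is not valid. The set $\varphi_{\j}(K)$ with $\j\in\{0,2\}^a$ still contains all three branches at smaller scales.

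The missing idea is a third anchor point whose address forces the digit $1$; the paper uses $x_0=\zeta/(1-\lambda)$, the fixed point of $\varphi_1$. Suppose $x_0\in\varphi_{\i}(K)$ for some $\varphi_{\i}\in\mathcal G$. Then the SSC forces $\i=1^n$, and $\varphi_{1^n}(\cdot+t)=\varphi_{\j}$ with $\j\neq 1^n$. This gives an expression for $t$ whose $\zeta$-coefficient is strictly negative. Combining it with the relation at $0$ yields $\zeta\in\Q(\lambda)$. If instead $x_0\in\varphi_{\i}(K+t)$, the coefficient is strictly positive, and one combines it with the relation at the maximum $1$ of $K$.

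These relations are algebraic only because $\lambda^a t$ equals $\varphi_{\j}(0)$ \emph{exactly}. That again requires the cylinder classification \cite[Theorem 1.1]{Wang-2025}, which applies here because $\zeta\neq(1-\lambda)/2$. Knowing merely $\lambda^a t\in K$ gives no finite relation at all.
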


Finally, we give an example concerning inhomogeneous self-similar sets.
\begin{example}
For $0< \lambda < (\sqrt{5} -1)/2$, let $K$ be the attractor of the self-similar IFS
\[ \big\{ f_1(x) = \lambda x, f_2(x) = \lambda^2 x + 1 - \lambda^2 \big\}.  \]
Then the union $K \cup (K + \lambda^{-2} -1)$ is a self-similar set, which is the attractor of the self-similar IFS
\[ \big\{ g_1(x) = \lambda^2 x, \; g_2(x)=\lambda^3 x,\; g_3(x)=\lambda^2 x + \lambda^{-2} -1, \; g_4(x)=\lambda^3 x + \lambda^{-2} -1 \big\}. \]
\end{example}

The remainder of this paper is organized as follows.
In Section \ref{sec:2}, we provide a general criterion for the self-similarity of unions of self-similar sets and their translations and prove Theorem \ref{thm:union-self-similar}.
In Section \ref{sec:3}, we explore the equivalent characterization of admissible translation vectors via directed graph, and prove Theorem \ref{thm:graph-criterion}.
In Section \ref{sec:4}, we apply Theorems \ref{thm:union-self-similar} and \ref{thm:graph-criterion} to prove Corollary \ref{cor:m=1} and present some examples.
Finally, in Section \ref{sec:5}, we prove Theorems \ref{thm:translation-countable} and \ref{thm:not-self-similar}.

\section{Proof of Theorem \ref{thm:union-self-similar}}\label{sec:2}

In this section, we will prove Theorem \ref{thm:union-self-similar}.
We begin with a general criterion for the self-similarity of unions of self-similar sets and their translations.

Recall that $K \subset \R$ is a self-similar set if it is the attractor of a self-similar IFS $\mathcal{F}= \big\{ f_i(x) = \lambda_i x + a_i \big\}_{i=1}^\ell$, where $ 0 < |\lambda_i| < 1$ and $a_i \in \R$.
In this case, the set $K$ is also the attractor of the self-similar IFS $\mathcal{F}^{(n)} = \big\{ f_{i_1} \circ f_{i_2} \circ \cdots \circ f_{i_n}: 1 \le i_1, i_2, \ldots, i_n \le \ell\big\}$ for any $n \in \N$.
Thus, we can always assume that a self-similar set has a generating self-similar IFS with sufficiently small contractive ratios.
Note that if the IFS $\mathcal{F}$ satisfies the SSC, then $\mathcal{F}^{(n)}$ also satisfies the SSC for any $n \in \N$.

\begin{proposition}\label{prop:union-similarity-general}
  Let $K \subset \R$ be a self-similar set that satisfies the SSC, and let $\t=(t_0,t_1,\ldots, t_m)\in\R^{m+1}$ with $t_0< t_1 < \cdots < t_m$.
  Then the union $K_{\t}=\bigcup_{j=0}^m(K+t_j)$ is a self-similar set if and only if
  there exists a finite set $\mathcal G$ of similitudes such that
  \begin{equation}\label{eq:G-K}
    \bigcup_{g\in\mathcal G} g(K_{\t})=K.
  \end{equation}
\end{proposition}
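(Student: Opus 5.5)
The plan is to prove the two directions separately. For the forward direction, I would use the structure of $K_{\t}$ as a self-similar set together with the SSC of $K$ to locate a small similar copy of $K_{\t}$ sitting inside a single piece $K+t_j$. For the converse, I would build an explicit IFS for $K_{\t}$ out of $\mathcal G$ and an SSC IFS for $K$.

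\emph{Sufficiency.} Suppose $\bigcup_{g\in\mathcal G} g(K_{\t})=K$, and let $\{f_i\}_{i=1}^\ell$ be an IFS with attractor $K$. It is convenient first to iterate, replacing $\{f_i\}$ by $\mathcal F^{(n)}$. Then
\[ K+t_j=\bigcup_{g\in\mathcal G}\big(g(K_{\t})+t_j\big), \]
so that
\[ K_{\t}=\bigcup_{j=0}^m\bigcup_{g\in\mathcal G}(\tau_{t_j}\circ g)(K_{\t}), \]
where $\tau_s(x)=x+s$. The maps $\tau_{t_j}\circ g$ need not be contractions, so I would compose with the $f_i$ first. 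Since $K=\bigcup_{\i\in\Omega^n}f_{\i}(K)$, we get
\[ K=\bigcup_{\i}\bigcup_{g\in\mathcal G} f_{\i}\circ g(K_{\t}). \]
Choosing $n$ large makes every map $\tau_{t_j}\circ f_{\i}\circ g$ a contraction with ratio at most $\max_g|g'|\cdot\lambda_{\max}^n<1$. Hence $K_{\t}$ is the attractor of this finite contractive IFS, by uniqueness of the attractor. This direction does not need the SSC.

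\emph{Necessity.} Suppose $K_{\t}$ is the attractor of a contractive IFS $\{h_k\}$; iterating, we may take the ratios as small as we like. Fix an SSC IFS for $K$ with first-level pieces $f_i(K)$ that are pairwise disjoint. Let $\delta>0$ be the minimal gap between distinct pieces $f_{\i}(K)+t_j$ at a suitable level, chosen so that each such piece has diameter smaller than the gaps. The key is a compactness/separation step: after iterating the $h$-IFS enough, each $h_{\mathbf k}(K_{\t})$ has diameter below the Lebesgue number of the relatively open cover of $K_{\t}$ by the sets $(K+t_j)$ restricted to these pieces. Concretely, choose a point $x\in K\setminus\bigcup_{j\ge1}(K+t_j)$; for instance, take the leftmost point of $K_{\t}$, which lies only in $K+t_0$, or a neighbourhood of it. Because $K_{\t}\setminus\bigcup_{j\ge1}(K+t_j)$ contains a relatively open nonempty subset $U$ of $K$, some small cylinder $h_{\mathbf k}(K_{\t})$ lies in $U\subset K$. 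The SSC is what provides the clopen structure here: $K$ is totally disconnected with the cylinders $f_{\i}(K)$ clopen in $K$, so $U$ contains a cylinder $f_{\i}(K)$.

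We then need all of $K$, not just a piece. Since $K_{\t}\supset K$, the cylinder $f_{\i}(K)$ is a relatively open subset of $K_{\t}$ away from the other translates, and it is covered by small $h$-cylinders. Those meeting $f_{\i}(K)$ are contained in it once their diameter is below the gap $\delta$. Thus
\[ f_{\i}(K)=\bigcup_{\mathbf k\in F}h_{\mathbf k}(K_{\t}) \]
for a finite set $F$. Applying $f_{\i}^{-1}$ gives \eqref{eq:G-K} with $\mathcal G=\{f_{\i}^{-1}\circ h_{\mathbf k}:\mathbf k\in F\}$.

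The main obstacle is this necessity step: ensuring that a relatively clopen piece of $K$ is exactly a union of $h$-cylinders, which is where the SSC is needed. I would handle it by taking the leftmost level-$n$ cylinder $f_{\i}(K)$ of $K$ containing $\min K_{\t}$. I would argue that its distance to $\bigcup_{j\ge1}(K+t_j)$ and to the other cylinders is positive, possibly after passing to a deeper cylinder near the minimum, since $\min(K+t_1)>\min K$. Then I would take $h$-cylinders of diameter smaller than that distance.
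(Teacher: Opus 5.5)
Your proposal is correct and is essentially the paper's proof: for necessity you isolate a small cylinder of $K+t_0$ containing $\min K_{\t}$, which lies at positive distance from the rest of $K_{\t}$ by the SSC and because $\min(K+t_1)>\min(K+t_0)$, and then cover it exactly by the $h$-cylinders of smaller diameter that meet it (write the cylinder as $f_{\i}(K)+t_0$ and invert $f_{\i}+t_0$ when $t_0\neq 0$). For sufficiency the paper uses $\{g+t_j\}$ directly; your composition with $f_{\i}$ is valid but unnecessary, because $\mathrm{diam}\, g(K_{\t})\le \mathrm{diam}\, K<\mathrm{diam}\, K_{\t}$ already forces every $g\in\mathcal G$ to be a contraction.
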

\begin{proof}
  The sufficiency is straightforward. By (\ref{eq:G-K}), one can check that $K_{\t}$ is the attractor of the self-similar IFS $\{ g(x)+ t_j: g \in \mathcal{G},\; 0 \le j \le m \}$.
  We now turn to the necessity.

  Write $a= \min K$ and $b = \max K$.
  Let $\mathcal{F}=\{f_i(x) = \lambda_i x + a_i \}_{i=1}^{\ell}$ be the self-similar IFS satisfying the SSC such that $K$ is the attractor of $\mathcal{F}$.
  Note that $a,b \in K = \bigcup_{i=1}^\ell f_i(K)$.
  Without loss of generality, we may assume that $a \in f_1(K)$ and $b \in f_\ell(K)$.
  If $\lambda_1>0$ then we have $a = f_1(a)$.
  If $\lambda_\ell >0$ then we have $b = f_\ell (b)$.
  If $\lambda_1< 0$ and $\lambda_\ell < 0$, then we have $a=f_1(b)$ and $b = f_\ell(a)$.
  In this situation, instead of $\mathcal{F}$, consider the self-similar IFS $\mathcal{F}^{(2)}=\big\{ f_i \circ f_j: 1 \le i, j \le \ell \big\}$, where $\mathcal{F}^{(2)}$ satisfies the SSC, $K$ is the attractor of $\mathcal{F}^{(2)}$, and $a=f_1\circ f_\ell(a)$.
  Thus, we can assume that the IFS $\mathcal{F}$ satisfies either $a = f_1(a)$ or $b = f_\ell(b)$.
  The arguments for the two cases are essentially identical, so we restrict our attention to the case $a=f_1(a)$ in what follows.

  Choose a large integer $n \in \N$ such that $|\lambda_1|^n(b-a) < t_1 - t_0$, and let $E = f_1^n(K) + t_0$.
  Since $E \subset [a+t_0, a+|\lambda_1|^n(b-a)+t_0]$ and $\bigcup_{j=1}^m (K+t_j) \subset [a+t_1, +\f)$, we have
  \begin{equation}\label{eq:positive-dist}
    \mathrm{dist}\bigg( E , \bigcup_{j=1}^m (K+t_j) \bigg)>0.
  \end{equation}
  Note that the IFS $\mathcal{F}$ satisfies the SSC.
  We have $\mathrm{dist}\big( f_1^n(K), K \setminus f_1^n(K) \big)>0$.
  It follows that $\mathrm{dist}\big( E, (K+t_0) \setminus E \big)>0$.
  Together with (\ref{eq:positive-dist}), we conclude that $\mathrm{dist}(E, K_{\t} \setminus E)>0$.

  Suppose that $K_{\t}$ is a self-similar set, i.e., the attractor of the self-similar IFS $\big\{ \varphi_i(x) = r_i x + b_i \big\}_{i=1}^{q}$.
  Let $r = \max\{|r_1|, |r_2|, \ldots, |r_q|\}$.
  We may assume that $r$ is sufficiently small so that
  \begin{equation}\label{eq:r-small}
    r < \frac{\mathrm{dist}(E, K_{\t} \setminus E)}{b-a+t_m - t_0}.
  \end{equation}
  Let $\Lambda = \big\{ 1 \le i \le q: \varphi_i(K_{\t}) \cap E \ne \emptyset \big\}$.
  Note that $E \subset K_{\t} = \bigcup_{i=1}^q \varphi_i(K_{\t})$.
  We have
  \begin{equation}\label{eq:e-cover}
    E \subset \bigcup_{i \in \Lambda} \varphi_i(K_{\t}).
  \end{equation}
  For any $1 \le i \le q$, by (\ref{eq:r-small}) the diameter of $\varphi_i(K_{\t})$ is strictly less than $\mathrm{dist}(E, K_{\t} \setminus E)$, which means that $\varphi_i(K_{\t})$ cannot intersect both $E$ and $K_{\t} \setminus E$ simultaneously.
  For $i \in \Lambda$, since $\varphi_i(K_{\t}) \cap E \ne \emptyset$, we conclude that $\varphi_i(K_{\t}) \subset E$.
  Together with (\ref{eq:e-cover}) we obtain \[ E = \bigcup_{i \in \Lambda} \varphi_i(K_{\t}). \]
  Write $\varphi(x) = f_1^n(x) + t_0$.
  Note that $E = \varphi(K)$.
  Thus the set $\mathcal{G}=\{ \varphi^{-1} \circ \varphi_i: i\in \Lambda\}$ satisfies (\ref{eq:G-K}).
  The proof is complete.
\end{proof}

Before we turn to the proof of Theorem \ref{thm:union-self-similar}, we present a corollary of Proposition \ref{prop:union-similarity-general}.

\begin{corollary}\label{cor:scale}
  Let $K \subset \R$ be the attractor of a homogeneous self-similar set IFS $\mathcal{F} = \big\{ f_i(x) = \lambda x + a_i\big\}_{i=1}^\ell$ that satisfies the SSC, where $0< \lambda <1$ and $a_i \in \R$.
  Let $\t=(t_0,t_1,\ldots, t_m)\in\R^{m+1}$ with $0=t_0< t_1 < \cdots < t_m$.
  If $K_{\t}=\bigcup_{j=0}^m(K+t_j)$ is a self-similar set, then $K_{\lambda^{-k} \t} = \bigcup_{j=0}^m(K+\lambda^{-k}t_j)$ is also a self-similar set for any $k \in \N$.
\end{corollary}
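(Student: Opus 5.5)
By Proposition \ref{prop:union-similarity-general}, it suffices to produce a finite family $\mathcal G'$ of similitudes with $\bigcup_{g\in\mathcal G'} g(K_{\lambda^{-k}\t}) = K$. Since $K_{\t}$ is self-similar, the same proposition gives a finite family $\mathcal G$ with $\bigcup_{g\in\mathcal G} g(K_{\t})=K$. The plan is to iterate the IFS $\mathcal F$ exactly $k$ times, which turns $K$ into copies scaled by $\lambda^k$, and then rescale by $\lambda^{-k}$ to relate $K_{\t}$ to $K_{\lambda^{-k}\t}$.

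First, write $K=\bigcup_{\i\in\{1,\ldots,\ell\}^k} f_{\i}(K)$, where $f_{\i}(x)=\lambda^k x + a_{\i}$ because the IFS is homogeneous. For each $j$,
\[ K+t_j=\bigcup_{\i} \big(\lambda^k K + a_{\i} + t_j\big)=\bigcup_{\i}\big(\lambda^k (K+\lambda^{-k}t_j)+a_{\i}\big). \]
Taking the union over $j$ gives
\[ K_{\t}=\bigcup_{\i} h_{\i}\big(K_{\lambda^{-k}\t}\big), \qquad h_{\i}(x)=\lambda^k x + a_{\i}. \]
The key point is that the same map $h_{\i}$ works for every $j$, because all the maps $f_{\i}$ share the ratio $\lambda^k$. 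Without homogeneity this identity would fail.

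Substituting into the identity for $\mathcal G$ yields
\[ K=\bigcup_{g\in\mathcal G}\bigcup_{\i} g\circ h_{\i}\big(K_{\lambda^{-k}\t}\big). \]
So $\mathcal G'=\{g\circ h_{\i}\}$ is a finite family of similitudes satisfying (\ref{eq:G-K}) for the vector $\lambda^{-k}\t$. This vector still satisfies $0=\lambda^{-k}t_0<\lambda^{-k}t_1<\cdots<\lambda^{-k}t_m$. The sufficiency direction of Proposition \ref{prop:union-similarity-general}, which needs only (\ref{eq:G-K}) and no separation condition, then shows that $K_{\lambda^{-k}\t}$ is self-similar.

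There is no real obstacle. The only care needed is to invoke the necessity direction, which uses the SSC, for $\t$, and the sufficiency direction for $\lambda^{-k}\t$.
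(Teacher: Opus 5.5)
Your proof is correct and is essentially the paper's own argument. The paper also uses homogeneity to establish $\bigcup_{\i\in\Omega^k} f_{\i}(K_{\lambda^{-k}\t}) = K_{\t}$, composes these maps with the family $\mathcal G$ supplied by the necessity part of Proposition~\ref{prop:union-similarity-general}, and concludes with the sufficiency part.
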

\begin{proof}
  Fix $k \in \N$. Note that $K_{\lambda^{-k} \t} \ne \lambda^{-k} K_{\t}$. The desired conclusion is not immediately obvious.
  Let $\Omega = \{1,2,\ldots, \ell\}$, and for $\i=i_1i_2\ldots i_n \in \Omega^n$ define $f_{\i} = f_{i_1} \circ f_{i_2} \circ \cdots \circ f_{i_n}$.
  Note that \[ K = \bigcup_{\i \in \Omega^k} f_{\i}(K). \]
  It follows that
  \begin{equation}\label{eq:scale-general}
    \bigcup_{\i \in \Omega^k} f_{\i}\big( K_{\lambda^{-k} \t} \big)
    = \bigcup_{j=0}^m \bigcup_{\i \in \Omega^k} f_{\i} \big( K+\lambda^{-k}t_j \big)
    = \bigcup_{j=0}^m \bigcup_{\i \in \Omega^k} \big( f_{\i}(K)+t_j \big)
    = K_{\t}.
  \end{equation}
  If $K_{\t}$ is a self-similar set, then by Proposition \ref{prop:union-similarity-general} there exists a finite set $\mathcal G$ of similitudes such that $\bigcup_{g\in\mathcal G} g(K_{\t})=K$.
  It follows from (\ref{eq:scale-general}) that
  \[ \bigcup_{g\in\mathcal G} \bigcup_{\i \in \Omega^k} g \circ f_{\i} \big( K_{\lambda^{-k} \t} \big)=K. \]
  Again by Proposition \ref{prop:union-similarity-general}, we conclude that $K_{\lambda^{-k} \t}$ is also a self-similar set.
\end{proof}

Now we focus on homogeneous symmetric self-similar sets $\Ga = \Ga_{\beta, \{0,1,\ldots,N\}}$, which is the attractor of the self-similar IFS  \[ \Big\{ \phi_i(x)=\beta x + i \frac{1-\beta}{N}: i=0,1,\ldots, N \Big\},\] where $N \in \N$ and $0< \beta < 1/(N+1)$.
Recall that $\Omega=\{0,1,\ldots,N\}$, and for $\i= i_1 i_2 \ldots i_n \in \Omega^n$ define $\phi_{\i}=\phi_{i_1} \circ \phi_{i_2} \circ \cdots \circ \phi_{i_n}$.
The following lemma characterizes all contractive similitudes $f$ on $\R$ such that $f(\Gamma) \subset \Gamma$.

\begin{lemma}{\rm\cite[Corollary 1.3]{Wang-2025}}\label{lemma:Gamma-self-embedding}
  If $f$ is a contractive similitude on $\R$ such that $f(\Gamma) \subset \Gamma$, then there exists $\i \in \bigcup_{n=1}^\f \Omega^n$ such that $f(x) = \phi_{\i}(x)$ or $f(x) = \phi_{\i}(1-x)$.
\end{lemma}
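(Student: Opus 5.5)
The plan is a gap-length comparison. First normalize $f$ into a map of $\Gamma$ into itself whose image meets two first-level pieces; then show that the largest gap of $\Gamma$ has nowhere to fit unless the normalized map is an isometry.

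\emph{Setup.} Write $f(x)=\lambda x+c$ with $0<|\lambda|<1$, and let $\sigma(x)=1-x$, so that $\sigma(\Gamma)=\Gamma$. If $\lambda<0$, replace $f$ by $f\circ\sigma$; this has positive ratio and still maps $\Gamma$ into $\Gamma$. Recovering the form $\phi_{\i}(1-x)$ at the end is then immediate, so we may assume $\lambda>0$.

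Since $\beta<1/(N+1)$, the first-level intervals $\phi_i([0,1])$, $i\in\Omega$, are pairwise disjoint. Consecutive ones are separated by open gaps of length
\[ g_0=\frac{1-(N+1)\beta}{N}>0. \]
By self-similarity, every bounded complementary gap of $\Gamma$ inside $[0,1]$ has length $g_0\beta^k$ for some $k\ge 0$. In particular the largest such gap has length $g_0$.

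\emph{Localization.} The set $f(\Gamma)$ lies in $\Gamma\subset[0,1]$ and has diameter $\lambda<1$. Since the level-$n$ basic intervals $\phi_{\i}([0,1])$ have length $\beta^n\to0$ and are nested, there is a largest $n\ge0$ and a word $\i\in\Omega^n$ with $f(\Gamma)\subset\phi_{\i}([0,1])$. Put $g=\phi_{\i}^{-1}\circ f$. Because $\Gamma\cap\phi_{\i}([0,1])=\phi_{\i}(\Gamma)$, we get $g(\Gamma)\subset\Gamma$. Its ratio is $\rho=\lambda\beta^{-n}\le 1$, since $g(\Gamma)\subset[0,1]$ while $\Gamma$ has diameter $1$. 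By maximality of $n$, the set $g(\Gamma)$ meets at least two first-level pieces $\phi_i(\Gamma)$. Therefore the convex hull $[g(0),g(1)]$ contains an entire first-level gap $J$ of $\Gamma$, of length $g_0$.

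\emph{Gap comparison (main step).} Every point of the open interval $J$ lies outside $\Gamma\supset g(\Gamma)$. Since $J$ lies inside the hull of $g(\Gamma)$, it is contained in a single bounded complementary gap of $g(\Gamma)$. Now $g(\Gamma)$ is an affine copy of $\Gamma$ with ratio $\rho$, so its gaps have lengths $\rho g_0\beta^k\le\rho g_0$. This gives $g_0\le\rho g_0$, hence $\rho\ge1$. Combined with $\rho\le1$ we get $\rho=1$.

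Then $g(x)=x+c'$ maps $\Gamma$, whose hull is $[0,1]$, into $[0,1]$, which forces $c'=0$. So $g=\mathrm{id}$ and $f=\phi_{\i}$. Contractivity of $f$ excludes $n=0$, so $\i$ is a nonempty word, as required.

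The step needing the most care is the claim that a complementary gap of $\Gamma$ lying in the hull of $g(\Gamma)$ sits inside a single gap of $g(\Gamma)$, together with the exact list of gap lengths of $\Gamma$. Both rest on the separation coming from $\beta<1/(N+1)$, and both should be checked carefully before writing this step out.
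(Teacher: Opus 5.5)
Your proof is correct. It differs from the paper only in that the paper gives no argument here: it imports the lemma from \cite[Corollary 1.3]{Wang-2025}, a general result on self-embeddings of homogeneous self-similar sets. That same source also supplies the embedding facts used for the sets in Theorem \ref{thm:not-self-similar}.

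The two points you flagged both go through.
\begin{itemize}
\item Let $(a,b)$ be a complementary interval of $\Gamma$ in $[0,1]$, and let $\phi_{\i}([0,1])$ be the deepest basic interval containing $[a,b]$. Then $(a,b)$ must be exactly $(\phi_{\i j}(1),\phi_{\i (j+1)}(0))$ for some $j$, so its length is $g_0\beta^{|\i|}\le g_0$. This upper bound is the only fact about gap lengths you actually use.
\item An open interval that is disjoint from $g(\Gamma)$ and contained in $[g(0),g(1)]$ lies in one bounded component of $\R\setminus g(\Gamma)$. Since $g$ is an increasing affine map, that component is the $g$-image of a gap of $\Gamma$.
\end{itemize}

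Your approach gives a short, self-contained proof that uses only the strong separation guaranteed by $\beta<1/(N+1)$. Its essential input is that every first-level gap of $\Gamma$ has the same length $g_0$, which is also the maximal gap length. So any two first-level pieces met by $g(\Gamma)$ force a largest gap into the hull of $g(\Gamma)$.

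The same comparison works whenever all first-level gaps are equal. An example is the two-map set in Theorem \ref{thm:not-self-similar}(i), which has a single first-level gap; there the orientation-reversing case is then ruled out by $\lambda_0\neq\lambda_1$.

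It does not close the argument for the three-map set in Theorem \ref{thm:not-self-similar}(ii) with $\zeta\neq(1-\lambda)/2$. There the hull of $g(K)$ may contain only the shorter first-level gap, and the comparison yields only $\rho\ge g_{\min}/g_{\max}$ rather than $\rho=1$. This is where the more general machinery of \cite{Wang-2025} is actually needed.
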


By combining Lemma \ref{lemma:Gamma-self-embedding} with Proposition \ref{prop:union-similarity-general}, we immediately obtain the following proposition.

\begin{proposition}\label{prop:Gamma-self-similarity}
  Let $\t=(t_0,t_1,\ldots, t_m)\in\R^{m+1}$ with $0=t_0< t_1 < \cdots < t_m$.
  Then the union $\Ga_{\t}=\bigcup_{j=0}^m(\Ga+t_j)$ is a self-similar set if and only if there exist two finite sets $\I_1, \I_2\subset \bigcup_{n=1}^\f \Omega^n$ such that
  \[ \bigcup_{\i\in \I_1}\phi_{\i}(\Ga_{\t})\cup\bigcup_{\i\in\I_2}\phi_{\i}(1-\Ga_{\t})=\Ga. \]
\end{proposition}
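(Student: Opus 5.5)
The plan is to apply Proposition \ref{prop:union-similarity-general} with $K=\Ga$, and then use Lemma \ref{lemma:Gamma-self-embedding} to pin down the form of the similitudes that appear.

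First we check the hypotheses of Proposition \ref{prop:union-similarity-general}. The IFS (\ref{eq:IFS-Ga}) satisfies the SSC. Indeed, since $\beta<1/(N+1)$, the intervals $\phi_i([0,1])=[i\frac{1-\beta}{N},\, i\frac{1-\beta}{N}+\beta]$ are pairwise disjoint. The gap between consecutive intervals is $\frac{1-\beta}{N}-\beta=\frac{1-(N+1)\beta}{N}>0$. Proposition \ref{prop:union-similarity-general} then says that $\Ga_{\t}$ is self-similar if and only if there is a finite set $\mathcal G$ of similitudes with $\bigcup_{g\in\mathcal G} g(\Ga_{\t})=\Ga$.

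\emph{Sufficiency.} Take $\mathcal G=\{\phi_{\i}:\i\in\I_1\}\cup\{\phi_{\i}(1-\cdot):\i\in\I_2\}$. The displayed identity is exactly (\ref{eq:G-K}), so $\Ga_{\t}$ is self-similar.

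\emph{Necessity.} Let $\mathcal G$ be as in (\ref{eq:G-K}).
\begin{enumerate}[(i)]
\item Since $\Ga\subset\Ga_{\t}$, each $g\in\mathcal G$ satisfies $g(\Ga)\subset g(\Ga_{\t})\subset\Ga$.
\item Each $g$ is contractive. Because $\Ga\subsetneq\Ga_{\t}$, the diameter of $\Ga_{\t}$ is $1+t_m>1=\operatorname{diam}\Ga$. Hence $|g'|\cdot(1+t_m)\le 1$, so $|g'|<1$.
\item Lemma \ref{lemma:Gamma-self-embedding} therefore applies to every $g\in\mathcal G$. It gives a word $\i\in\bigcup_n\Omega^n$ with $g=\phi_{\i}$ or $g=\phi_{\i}(1-\cdot)$.
\item Collect the words of the first kind into $\I_1$ and those of the second kind into $\I_2$. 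Both sets are finite, and the required identity holds.
\end{enumerate}

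The only step needing care is the contractivity check in (ii). The lemma is stated for contractive similitudes, and this strict bound $|g'|<1$ is what guarantees the words produced are nonempty. Every other step is bookkeeping.
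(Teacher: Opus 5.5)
Your proof is correct and follows the paper's argument. Sufficiency comes directly from Proposition~\ref{prop:union-similarity-general}; for necessity, $\Ga\subset\Ga_{\t}$ gives $g(\Ga)\subset\Ga$ for each $g$, and Lemma~\ref{lemma:Gamma-self-embedding} then identifies each $g$ as $\phi_{\i}$ or $\phi_{\i}(1-\cdot)$. Your explicit checks of the SSC and of $|g'|\le 1/(1+t_m)<1$ fill in details the paper leaves implicit, and they are exactly what the lemma needs to apply with nonempty words.
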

\begin{proof}
  The sufficiency follows directly from the sufficient part of Proposition \ref{prop:union-similarity-general}.
  For the necessity, we assume that the union $\Ga_{\t}$ is a self-similar set.
  By Proposition \ref{prop:union-similarity-general}, there exists a finite set $\mathcal G$ of similitudes such that $\bigcup_{g\in\mathcal G} g(\Ga_{\t})=\Ga$. Note that $\Gamma \subset \Ga_{\t}$, so for each $g\in\mathcal G$ we have $g(\Ga) \subset \Ga$.
  By Lemma \ref{lemma:Gamma-self-embedding}, each $g\in\mathcal G$ has the form $\phi_{\i}(x)$ or $\phi_{\i}(1-x)$ for some $\i \in \bigcup_{n=1}^\f \Omega^n$.
  Thus, the two finite sets \[ \mathcal{I}_1 = \bigg\{ \i \in \bigcup_{n=1}^\f \Omega^n: \phi_{\i}(x) \in \mathcal{G}  \bigg\} \quad\text{and}\quad \mathcal{I}_2 = \bigg\{ \i \in \bigcup_{n=1}^\f \Omega^n: \phi_{\i}(1-x) \in \mathcal{G}  \bigg\} \] are the desired sets.
\end{proof}

Recall that $E_n = \big\{ \phi_{\i}(0): \i \in \Omega^n \big\}$ and $T_n = \beta^{-n} E_n$ for $n \in \N$, and \[ T = \bigcup_{n=1}^\f T_n. \]
For a translation vector $\t=(t_0, t_1,\ldots, t_m)\in\R^{m+1}$ with $0=t_0<t_1<\cdots<t_m$, its conjugate $\widehat\t=(\widehat t_0, \widehat t_1,\ldots, \widehat t_m)$ is defined by letting $\widehat t_j = t_m - t_{m-j}$ for $0\le j \le m$.

\begin{proposition}\label{prop:translation}
  Let $\t=(t_0,t_1,\ldots, t_m)\in\R^{m+1}$ with $0=t_0< t_1 < \cdots < t_m$.
  If the union $\Ga_{\t}=\bigcup_{j=0}^m(\Ga+t_j)$ is a self-similar set, then we have either $\t \in T^{m+1}$ or $\widehat\t \in T^{m+1}$.
\end{proposition}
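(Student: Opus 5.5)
The plan is to reduce to Proposition \ref{prop:Gamma-self-similarity}, look at which piece of the resulting decomposition of $\Ga$ covers the point $0=\min\Ga$, and then use Lemma \ref{lemma:Gamma-self-embedding} to pin down the translations. Suppose $\Ga_{\t}$ is self-similar. By Proposition \ref{prop:Gamma-self-similarity} there are finite sets $\I_1,\I_2\subset\bigcup_{n}\Omega^n$ with
\[ \bigcup_{\i\in \I_1}\phi_{\i}(\Ga_{\t})\cup\bigcup_{\i\in\I_2}\phi_{\i}(1-\Ga_{\t})=\Ga. \]
Since $0\in\Ga$ and every piece lies in $\Ga\subset[0,1]$, the point $0$ is the minimum of some piece. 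Two facts will be used repeatedly. First, since $\Ga=1-\Ga$, we have $1-\Ga_{\t}=\bigcup_{j}(\Ga-t_j)$. Second, for $\i\in\Omega^n$ and any $s$, the set $\phi_{\i}(\Ga+s)=\beta^n\Ga+\phi_{\i}(0)+\beta^n s$ is the image of $\Ga$ under the contractive similitude $x\mapsto\beta^n x+\phi_{\i}(0)+\beta^n s$.

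\emph{Case 1: $0=\min\phi_{\i}(\Ga_{\t})$ for some $\i\in\I_1$ with $|\i|=n$.}
The minimum of $\phi_{\i}(\Ga_{\t})$ is $\phi_{\i}(0)$, so $\phi_{\i}(0)=0$ and hence $\i=0^n$. Fix $j$. Then $\phi_{0^n}(\Ga+t_j)=\beta^n\Ga+\beta^n t_j\subset\Ga$ is the image of $\Ga$ under a contractive similitude of ratio $\beta^n$ that maps into $\Ga$.
\begin{itemize}
\item By Lemma \ref{lemma:Gamma-self-embedding}, this similitude equals $\phi_{\j}(x)$ or $\phi_{\j}(1-x)$ for some word $\j$. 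Comparing contraction ratios forces $|\j|=n$.
\item By the symmetry $\Ga=1-\Ga$, in either case the image set is $\phi_{\j}(\Ga)$, whose minimum is $\phi_{\j}(0)$.
\item Comparing minima gives $\beta^n t_j=\phi_{\j}(0)\in E_n$, that is, $t_j\in T_n$.
\end{itemize}
Since this holds for every $j$ with the same $n$, we get $\t\in T^{m+1}$.

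\emph{Case 2: $0=\min\phi_{\i}(1-\Ga_{\t})$ for some $\i\in\I_2$ with $|\i|=n$.}
Here $\phi_{\i}(1-\Ga_{\t})=\bigcup_j\big(\beta^n\Ga+\phi_{\i}(0)-\beta^n t_j\big)$, whose minimum is $\phi_{\i}(0)-\beta^n t_m$. Hence $\phi_{\i}(0)=\beta^n t_m$. Fix $j$ and apply the same argument as in Case 1 to the piece $\beta^n\Ga+\phi_{\i}(0)-\beta^n t_j\subset\Ga$. This gives
\[ \phi_{\i}(0)-\beta^n t_j=\beta^n(t_m-t_j)\in E_n . \]
Therefore $\widehat t_{m-j}=t_m-t_j\in T_n$ for all $j$, and so $\widehat\t\in T^{m+1}$.

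The only delicate step is the application of Lemma \ref{lemma:Gamma-self-embedding} in Case 1 and its repeat in Case 2. One has to check that the lemma yields a word of exactly length $n$, which follows by matching the contraction ratio $\beta^n$ with $\beta^{|\j|}$. One also has to handle the orientation-reversing alternative $\phi_{\j}(1-x)$, which the symmetry of $\Ga$ does. Once these are settled, the translation is read off as the left endpoint of the copy, and that endpoint lies in $E_n$.
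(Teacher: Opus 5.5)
Your proof is correct and follows the paper's argument essentially step for step. Like the paper, you reduce via Proposition \ref{prop:Gamma-self-similarity}, split according to which piece contains $0=\min\Ga$, and use Lemma \ref{lemma:Gamma-self-embedding} to read off $\beta^n t_j$ (resp.\ $\beta^n(t_m-t_j)$) as the left endpoint $\phi_{\j}(0)\in E_n$. Your handling of the alternative $\phi_{\j}(1-x)$ via the symmetry of $\Ga$ is valid but unnecessary: the similitude $x\mapsto\beta^n x+\beta^n t_j$ has positive ratio, so that case cannot arise, which is why the paper simply writes $\phi_{\i}(x+t_j)=\phi_{\j}(x)$.
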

\begin{proof}
  By Proposition \ref{prop:Gamma-self-similarity}, there exist two finite sets $\I_1, \I_2\subset \bigcup_{n=1}^\f \Omega^n$ such that
  \begin{equation}\label{eq:I-1-I-2-Gamma}
    \bigcup_{\i\in \I_1}\phi_{\i}(\Ga_{\t})\cup\bigcup_{\i\in\I_2}\phi_{\i}(1-\Ga_{\t})=\Ga.
  \end{equation}
  Note that $0 \in \Ga$.
  By (\ref{eq:I-1-I-2-Gamma}), there are only two possible cases: (i) $0 \in \phi_{\i}(\Ga_{\t})$ for some $\i\in \I_1$; (ii) $0 \in \phi_{\i}(1-\Ga_{\t})$ for some $\i\in \I_2$.

  \textbf{Case (i)}: $0 \in \phi_{\i}(\Ga_{\t})$ for some $\i\in \I_1$. Note that $0$ is the minimal value of $\Ga$.
  We have $0 = \min \phi_{\i}(\Gamma_{\t}) = \phi_{\i}(0)$. It follows that $\phi_{\i}(x) = \beta^n x$, where $n$ is the length of $\i$.
  For $0\le j \le m$, since $\phi_{\i}(\Ga + t_j) \subset \Ga_{\t} \subset \Gamma$, by Lemma \ref{lemma:Gamma-self-embedding} there exists $\mathbf{j} \in \bigcup_{k=1}^\f \Omega^k$ (depending on $j$) such that $\phi_{\i}(x +t_j) = \phi_{\mathbf{j}}(x)$.
  It follows that $\mathbf{j} \in \Omega^n$ and $\phi_{\mathbf{j}}(0) = \phi_{\i}(t_j)=\beta^n t_j$.
  So we have $t_j = \beta^{-n} \phi_{\mathbf{j}}(0) \in T_n \subset T$.
  Thus, we obtain that $\t \in T^{m+1}$.

  \textbf{Case (ii)}: $0 \in \phi_{\i}(1-\Ga_{\t})$ for some $\i\in \I_2$.
  Since $0$ is the minimal value of $\Ga$, we have $0 = \min \phi_{\i}(1-\Gamma_{\t}) = \phi_{\i}(-t_m)$. That is, $\phi_{\i}(0) = \beta^n t_m$ where $n$ is the length of $\i$.
  For $0\le j \le m$, since $\phi_{\i}(\Ga - t_j)= \phi_{\i}\big( 1- (\Ga + t_j) \big) \subset \phi_{\i}(1-\Ga_{\t}) \subset \Gamma$, by Lemma \ref{lemma:Gamma-self-embedding} there exists $\mathbf{j} \in \bigcup_{k=1}^\f \Omega^k$ (depending on $j$) such that $\phi_{\i}(x - t_j) = \phi_{\mathbf{j}}(x)$.
  It follows that $\mathbf{j} \in \Omega^n$ and $\phi_{\mathbf{j}}(0) = \phi_{\i}(-t_j) = \phi_{\i}(0) - \beta^n t_j = \beta^n(t_m - t_j)$.
  So $\widehat{t_j} = t_m - t_j = \beta^{-n} \phi_{\mathbf{j}}(0) \in T_n \subset T$.
  Thus, we obtain that $\widehat\t \in T^{m+1}$.
  The proof is complete.
\end{proof}

We are very close to completing the proof of Theorem \ref{thm:union-self-similar}.
Recall that for a translation vector $\t =(t_0, t_1, \ldots, t_m) \in T^{m+1}$ with $0=t_0<t_1<\cdots<t_m$, $\tau_{\t} \in \N$ is the smallest integer such that $ t_1, \ldots, t_m \in T_{\tau_{\t}}$.
For $n\ge \tau_{\t}$, we define
\begin{align*}
  \Omega_{\t}^n & = \big\{ \i \in \Omega^n:  \phi_{\i}(t_j) \in E_n \;\; \forall 1\le j \le m  \big\}, \\
  \widehat{\Omega}_{\t}^n & = \big\{ \i \in \Omega^n:  \phi_{\i}(-t_j) \in E_n \;\; \forall 1\le j \le m  \big\}.
\end{align*}

\begin{proof}[Proof of Theorem \ref{thm:union-self-similar}]
  If $\t$ or $\widehat\t$ is an admissible translation vector, then by Proposition \ref{prop:Gamma-self-similarity}, the corresponding set $\Ga_{\t}$ or $\Ga_{\widehat\t}$ is a self-similar set. Thus we conclude that $\Ga_{\t}$ is a self-similar set.
  This establishes the sufficiency.
  We now focus on the necessity.
  Suppose that $\Ga_{\t}$ is a self-similar set.
  By Proposition \ref{prop:translation}, we have either $\t \in T^{m+1}$ or $\widehat\t \in T^{m+1}$.

  We first consider the case $\t \in T^{m+1}$.
  Since $\Ga_{\t}$ is a self-similar set, by Proposition \ref{prop:Gamma-self-similarity}, there exist two finite sets $\I_1, \I_2\subset \bigcup_{n=1}^\f \Omega^n$ such that
  \begin{equation*}%\label{eq:I-1-I-2-Gamma-2}
    \bigcup_{\i\in \I_1}\phi_{\i}(\Ga_{\t})\cup\bigcup_{\i\in\I_2}\phi_{\i}(1-\Ga_{\t})=\Ga.
  \end{equation*}
  To verify that $\t$ is an admissible translation vector, it remains to show that $\I_1 \subset \bigcup_{n=\tau_{\t}}^\f \Omega_{\t}^n$ and $\I_2 \subset \bigcup_{n=\tau_{\t}}^\f \widehat\Omega_{\t}^n$.

  By the minimality of $\tau_{\t}$, there exists $1\le j_0 \le m$ such that $t_{j_0} \in T_{\tau_{\t}} \setminus T_{\tau_{\t}-1}$, where $T_0:=\{0\}$. This implies that
  \begin{equation}\label{eq:t-j-0}
    t_{j_0} \ge \frac{1-\beta}{N \beta^{\tau_{\t}}}.
  \end{equation}

  Take any $\i \in \mathcal{I}_1$ with length $n$.
  For any $1\le j \le m$, since $\phi_{\i}(\Ga + t_j) \subset \phi_{\i}(\Ga_{\t}) \subset \Gamma$, by Lemma \ref{lemma:Gamma-self-embedding} there exists $\mathbf{j} \in \bigcup_{k=1}^\f \Omega^k$ (depending on $j$) such that $\phi_{\i}(x +t_j) = \phi_{\mathbf{j}}(x)$.
  It follows that $\mathbf{j} \in \Omega^n$ and $\phi_{\i}(t_j)= \phi_{\mathbf{j}}(0)$.
  Thus, we obtain that $\phi_{\i}(t_j) \in E_n$ for any $1 \le j \le m$.
  Note that $\phi_{\i}(t_{j_0})= \beta^n t_{j_0} + \phi_{\i}(0) \in E_n\subset [0,1]$.
  It follows from (\ref{eq:t-j-0}) that \[ \frac{1-\beta}{N \beta^{\tau_{\t}-n}} \le \beta^n t_{j_0} \le 1. \]
  This implies that $n \ge \tau_{\t}$ because $\beta < 1/(N+1)$.
  So we conclude that $\i \in \Omega_{\t}^n$ with $n \ge \tau_{\t}$.
  Thus, we obtain that $\I_1 \subset \bigcup_{n=\tau_{\t}}^\f \Omega_{\t}^n$.

  Take any $\i \in \mathcal{I}_2$ with length $n$.
  For any $1\le j \le m$, since $\phi_{\i}(\Ga - t_j) = \phi_{\i}\big( 1-(\Ga+t_j) \big) \subset \phi_{\i}(1-\Ga_{\t}) \subset \Gamma$, by Lemma \ref{lemma:Gamma-self-embedding} there exists $\mathbf{j} \in \bigcup_{k=1}^\f \Omega^k$ (depending on $j$) such that $\phi_{\i}(x -t_j) = \phi_{\mathbf{j}}(x)$.
  It follows that $\mathbf{j} \in \Omega^n$ and $\phi_{\i}(-t_j)= \phi_{\mathbf{j}}(0)\in E_n$.
  Thus, we obtain that $\phi_{\i}(-t_j) \in E_n$ for any $1 \le j \le m$.
  Note that $\phi_{\i}(-t_{j_0})= \phi_{\i}(0) - \beta^n t_{j_0} \in E_n \subset [0,1]$.
  It follows from (\ref{eq:t-j-0}) that \[ \frac{1-\beta}{N \beta^{\tau_{\t}-n}} \le \beta^n t_{j_0} \le \phi_{\i}(0) \le 1. \]
  This implies that $n \ge \tau_{\t}$ because $\beta < 1/(N+1)$.
  So we conclude that $\i \in \widehat{\Omega}_{\t}^n$ with $n \ge \tau_{\t}$.
  Thus, we obtain that $\I_2 \subset \bigcup_{n=\tau_{\t}}^\f \widehat\Omega_{\t}^n$.

  Therefore, we have proved that $\t$ is an admissible translation vector.
  For the case $\widehat\t \in T^{m+1}$, observe that $\Gamma_{\widehat{t}}$ is also a self-similar set, and by the same argument as above, $\widehat{\t}$ is seen to be an admissible translation vector.
  This verifies the necessity.
  The proof is complete.
\end{proof}

\section{Proof of Theorem \ref{thm:graph-criterion}}\label{sec:3}

In this section, we present a criterion for admissible translation vectors using directed graphs, and prove Theorem \ref{thm:graph-criterion}.

Recall that $\Ga = \Ga_{\beta, \{0,1,\ldots,N\}}$ is the attractor of the self-similar IFS  \[ \Big\{ \phi_i(x)=\beta x + i \frac{1-\beta}{N}: i=0,1,\ldots, N \Big\},\] where $N \in \N$ and $0< \beta < 1/(N+1)$.
Let $\Omega=\{0,1,\ldots,N\}$, and for $\i= i_1 i_2 \ldots i_n \in \Omega^n$ define $\phi_{\i}=\phi_{i_1} \circ \phi_{i_2} \circ \cdots \circ \phi_{i_n}$.
Note that for any $n \in \N$, \[ \Gamma = \bigcup_{\i \in \Omega^n} \phi_{\i}(\Ga), \]
where the union on the right-hand side is pairwise disjoint.

Recall that $E_n = \big\{ \phi_{\i}(0): \i \in \Omega^n \big\}$ and $T_n = \beta^{-n} E_n$ for $n \in \N$, and \[ T = \bigcup_{n=1}^\f T_n. \]
In this section, we always assume that the translation vector $\t=(t_0, t_1 , \ldots , t_m) \in T^{m+1}$ with $0=t_0 < t_1 < \cdots < t_m$, and $\Ga_{\t} = \bigcup_{j=0}^m (\Ga + t_j)$.

Recall that $\tau_{\t} \in \N$ is the smallest integer such that $t_1, \ldots, t_m \in T_{\tau_{\t}}$.
For $n\ge \tau_{\t}$, let
\begin{align*}
  \Omega_{\t}^n & = \big\{ \i \in \Omega^n:  \phi_{\i}(t_j) \in E_n \;\; \forall 1\le j \le m  \big\}, \\
  \widehat{\Omega}_{\t}^n & = \big\{ \i \in \Omega^n:  \phi_{\i}(-t_j) \in E_n \;\; \forall 1\le j \le m  \big\},
\end{align*}
and let $\mathcal{A}_{\t}^n,\widehat{\mathcal{A}}_{\t}^n \subset \Omega^n$ such that
\begin{align}
  \big\{ \phi_{\j}(0): \j \in  \mathcal{A}_{\t}^n \big\} & = \big\{ \phi_\i(t_j):\; \i\in\Omega_{\t}^n,\; 0\le j\le m \big\}, \label{eq:A-t-n}\\
  \big\{ \phi_{\j}(0): \j \in \widehat{\mathcal{A}}_{\t}^n \big\} & = \big\{\phi_\i(-t_j): \; \i\in\widehat\Omega_{\t}^n,\; 0\le j\le m \big\}. \label{eq:hat-A-t-n}
\end{align}
Define
\begin{equation*}%\label{eq:W}
  \mathcal{W}_{\t}^n = \mathcal{A}_{\t}^n \cup \widehat{\mathcal{A}}_{\t}^n.
\end{equation*}
We first characterize admissible translation vectors using the sets $\mathcal{W}_{\t}^n,\; n \ge \tau_{\t}$.

\begin{proposition}\label{prop:symbol-criterion}
  A vector $\t=(t_0, t_1 , \ldots , t_m) \in T^{m+1}$ with $0=t_0 < t_1 < \cdots < t_m$ is an admissible translation vector if and only if there exists $\ell \ge \tau_{\t}$ such that
  \[ \bigcup_{n=\tau_{\t}}^\ell \mathcal{W}_{\t}^n \times \Omega^{\ell-n} = \Omega^\ell. \]
\end{proposition}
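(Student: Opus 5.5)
The plan is to translate the set-level identity in Definition \ref{def:admissible-translation} into a statement about cylinders. The key observation is that for $\i\in\Omega_{\t}^n$ one has
\[ \phi_{\i}(\Ga_{\t})=\bigcup_{j=0}^m \phi_{\i}(\Ga+t_j)=\bigcup_{j=0}^m\big(\beta^n\Ga+\phi_{\i}(t_j)\big). \]
Each point $\phi_{\i}(t_j)$ lies in $E_n$, so it equals $\phi_{\j}(0)$ for some $\j\in\Omega^n$, and $\beta^n\Ga+\phi_{\j}(0)=\phi_{\j}(\Ga)$. Hence, by (\ref{eq:A-t-n}),
\[ \bigcup_{\i\in\Omega_{\t}^n}\phi_{\i}(\Ga_{\t})=\bigcup_{\j\in\mathcal A_{\t}^n}\phi_{\j}(\Ga). \]
Similarly, for $\i\in\widehat\Omega_{\t}^n$, symmetry gives $\phi_{\i}(1-(\Ga+t_j))=\phi_{\i}(\Ga-t_j)=\beta^n\Ga+\phi_{\i}(-t_j)$, and therefore
\[ \bigcup_{\i\in\widehat\Omega_{\t}^n}\phi_{\i}(1-\Ga_{\t})=\bigcup_{\j\in\widehat{\mathcal A}_{\t}^n}\phi_{\j}(\Ga). \]
The same computation for a single $\i$ shows that $\phi_{\i}(\Ga_{\t})$, respectively $\phi_{\i}(1-\Ga_{\t})$, is a union of level-$n$ cylinders $\phi_{\j}(\Ga)$ with $\j\in\mathcal A_{\t}^n$, respectively $\j\in\widehat{\mathcal A}_{\t}^n$.

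For sufficiency, suppose $\bigcup_{n=\tau_{\t}}^\ell \mathcal W_{\t}^n\times\Omega^{\ell-n}=\Omega^\ell$. Take $\I_1=\bigcup_{n=\tau_{\t}}^\ell\Omega_{\t}^n$ and $\I_2=\bigcup_{n=\tau_{\t}}^\ell\widehat\Omega_{\t}^n$; both are finite. By the identities above, the left side of the admissibility equation equals $\bigcup_{n=\tau_{\t}}^{\ell}\bigcup_{\j\in\mathcal W_{\t}^n}\phi_{\j}(\Ga)$. Each $\phi_{\j}(\Ga)$ with $|\j|=n$ equals $\bigcup_{\mathbf k\in\Omega^{\ell-n}}\phi_{\j\mathbf k}(\Ga)$, so this union is $\bigcup_{\mathbf u\in\Omega^\ell}\phi_{\mathbf u}(\Ga)=\Ga$. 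Every piece lies in $\Ga$, so equality holds and $\t$ is admissible.

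For necessity, take finite $\I_1,\I_2$ realizing admissibility and let $\ell$ be the maximal length of a word in $\I_1\cup\I_2$. By the single-word version of the identities, each piece $\phi_{\i}(\Ga_{\t})$ or $\phi_{\i}(1-\Ga_{\t})$ with $|\i|=n$ is a union of cylinders $\phi_{\j}(\Ga)$ with $\j\in\mathcal W_{\t}^n$. Refining each such cylinder to level $\ell$, we get
\[ \Ga=\bigcup_{\mathbf u\in S}\phi_{\mathbf u}(\Ga),\qquad S\subset \bigcup_{n=\tau_{\t}}^\ell\mathcal W_{\t}^n\times\Omega^{\ell-n}. \]
The level-$\ell$ cylinders $\phi_{\mathbf u}(\Ga)$, $\mathbf u\in\Omega^\ell$, are pairwise disjoint and nonempty, which follows from the SSC since $\beta<1/(N+1)$. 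So every $\mathbf u\in\Omega^\ell$ must lie in $S$, giving the required equality.

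The main obstacle is bookkeeping rather than substance. One must check that $\phi_{\i}(\Ga+t_j)$ is exactly one cylinder $\phi_{\j}(\Ga)$, which is where $\phi_{\i}(t_j)\in E_n$ and $\phi_{\i}(x+t_j)=\phi_{\i}(x)+\beta^nt_j$ are used. One must also make sure the disjointness argument is applied at a common level $\ell$.
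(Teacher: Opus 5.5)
Your proof is correct and follows essentially the same route as the paper. Both arguments rest on the identity $\bigcup_{\i\in\Omega_{\t}^n}\phi_{\i}(\Ga_{\t})\cup\bigcup_{\i\in\widehat\Omega_{\t}^n}\phi_{\i}(1-\Ga_{\t})=\bigcup_{\j\in\mathcal{W}_{\t}^n}\phi_{\j}(\Ga)$, take $\I_1=\bigcup_{n=\tau_{\t}}^{\ell}\Omega_{\t}^n$ and $\I_2=\bigcup_{n=\tau_{\t}}^{\ell}\widehat\Omega_{\t}^n$ for sufficiency, and refine to the maximal word length $\ell$ for necessity; the only difference is that you make explicit the pairwise disjointness of the level-$\ell$ cylinders, which the paper uses implicitly to pass from the set equality to $\bigcup_{n=\tau_{\t}}^{\ell}\mathcal{W}_{\t}^n\times\Omega^{\ell-n}=\Omega^\ell$.
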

\begin{proof}
  Note that for $n \ge \tau_{\t}$, by (\ref{eq:A-t-n}) we have
  \begin{align*}
    \bigcup_{\i \in \Omega_{\t}^n} \phi_{\i}(\Ga_{\t})
    & = \bigcup_{\i \in \Omega_{\t}^n} \bigcup_{j=0}^m \phi_{\i}(\Ga+t_j) = \bigcup_{\i \in \Omega_{\t}^n} \bigcup_{j=0}^m \Big( \beta^n \Ga + \phi_{\i}(t_j) \Big) \\
    & = \bigcup_{\j \in \mathcal{A}_{\t}^n} \Big( \beta^n \Ga + \phi_{\j}(0) \Big) = \bigcup_{\j \in \mathcal{A}_{\t}^n} \phi_{\j}(\Ga).
  \end{align*}
  Similarly, by (\ref{eq:hat-A-t-n}) we have \[ \bigcup_{\i \in \widehat\Omega_{\t}^n} \phi_{\i}(1-\Ga_{\t}) = \bigcup_{\j \in \widehat{\mathcal{A}}_{\t}^n} \phi_{\j}(\Ga). \]
  It follows that for $n \ge \tau_{\t}$,
  \begin{equation}\label{eq:Omega-to-W}
    \bigcup_{\i \in \Omega_{\t}^n} \phi_{\i}(\Ga_{\t}) \cup \bigcup_{\i \in \widehat\Omega_{\t}^n} \phi_{\i}(1-\Ga_{\t}) = \bigcup_{\j \in \mathcal{W}_{\t}^n} \phi_{\j}(\Ga).
  \end{equation}

  For the sufficiency, let $\I_1 = \bigcup_{n=\tau_{\t}}^\ell \Omega_{\t}^n$ and $\I_2 = \bigcup_{n=\tau_{\t}}^\ell \widehat\Omega_{\t}^n$. By (\ref{eq:Omega-to-W}) we obtain that
  \begin{align*}
    \bigcup_{\i \in \I_1} \phi_{\i}(\Ga_{\t}) \cup \bigcup_{\i \in \I_2} \phi_{\i}(1-\Ga_{\t})
    & = \bigcup_{n = \tau_{\t}}^\ell \bigcup_{\j \in \mathcal{W}_{\t}^n} \phi_{\j}(\Ga) \\
    & = \bigcup_{n = \tau_{\t}}^\ell \bigcup_{\j \in \mathcal{W}_{\t}^n} \bigcup_{\i \in \Omega^{\ell-n}} \phi_{\j\i}(\Ga) \\
    & = \bigcup_{\i \in \Omega^\ell} \phi_{\i}(\Ga) = \Ga.
  \end{align*}
  Thus, by Definition \ref{def:admissible-translation}, $\t$ is an admissible translation vector.

  Next, we prove the necessity. Assume that $\t$ is an admissible translation vector.
  By Definition \ref{def:admissible-translation}, there exist two finite sets $\I_1\subset \bigcup_{n=\tau_{\t}}^\f \Omega_{\t}^n$ and $\I_2 \subset \bigcup_{n=\tau_{\t}}^\f \widehat{\Omega}_{\t}^n$ such that
  \begin{equation}\label{eq:I-1-I-2-Gamma-3}
    \bigcup_{\i\in \I_1}\phi_{\i}(\Ga_{\t})\cup\bigcup_{\i\in\I_2}\phi_{\i}(1-\Ga_{\t})=\Ga.
  \end{equation}
  Let $\ell$ be the longest length of words in $\I_1 \cup \I_2$.
  Clearly, we have $\I_1 \subset \bigcup_{n=\tau_{\t}}^\ell \Omega_{\t}^n$ and $\I_2 \subset \bigcup_{n=\tau_{\t}}^\ell \widehat\Omega_{\t}^n$.
  It follows from (\ref{eq:I-1-I-2-Gamma-3}) and (\ref{eq:Omega-to-W}) that
  \[ \Ga \subset \bigcup_{n=\tau_{\t}}^\ell \Big(\bigcup_{ \i \in \Omega_{\t}^n} \phi_{\i}(\Gamma_{\t}) \cup \bigcup_{ \i \in \widehat{\Omega}_{\t}^n } \phi_{\i}(1-\Gamma_{\t}) \Big) = \bigcup_{n=\tau_{\t}}^\ell \bigcup_{\j \in \mathcal{W}_{\t}^n} \phi_{\j}(\Ga).  \]
  The inverse inclusion is obvious. Thus, we obtain that
  \[ \Ga = \bigcup_{n=\tau_{\t}}^\ell \bigcup_{\j \in \mathcal{W}_{\t}^n} \phi_{\j}(\Ga). \]
  That is, \[ \bigcup_{\i' \in \Omega^\ell} \phi_{\i'}(\Ga) = \bigcup_{n=\tau_{\t}}^\ell \bigcup_{\j \in \mathcal{W}_{\t}^n} \bigcup_{\i \in \Omega^{\ell-n}} \phi_{\j\i}(\Ga). \]
  Therefore, we conclude that \[ \bigcup_{n=\tau_{\t}}^\ell \mathcal{W}_{\t}^n \times \Omega^{\ell-n} = \Omega^\ell, \] as desired.
\end{proof}

Next, we investigate the relation between the sets $\mathcal{W}_{\t}^{n},\;n \ge \tau_{\t}$.

\begin{lemma}\label{lemma:W-subset}
  For $n \ge \tau_{\t}$, we have
  \[ \Omega \times \mathcal{W}_{\t}^{n} \subset \mathcal{W}_{\t}^{n+1}.\]
\end{lemma}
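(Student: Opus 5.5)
The plan is to show separately that $\Omega\times\mathcal{A}_{\t}^n\subset\mathcal{A}_{\t}^{n+1}$ and $\Omega\times\widehat{\mathcal{A}}_{\t}^n\subset\widehat{\mathcal{A}}_{\t}^{n+1}$. Taking the union then gives the statement, since $\mathcal{W}_{\t}^{n}=\mathcal{A}_{\t}^n\cup\widehat{\mathcal{A}}_{\t}^n$.

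\textbf{Step 1: $\Omega_{\t}^n$ is closed under left extension.} Let $\i\in\Omega_{\t}^n$ and $k\in\Omega$. For each $1\le j\le m$ we have $\phi_{\i}(t_j)=\phi_{\j}(0)$ for some $\j\in\Omega^n$. Hence
\[ \phi_{k\i}(t_j)=\phi_k\big(\phi_{\i}(t_j)\big)=\phi_k\big(\phi_{\j}(0)\big)=\phi_{k\j}(0)\in E_{n+1}. \]
So $k\i\in\Omega_{\t}^{n+1}$, and the same computation shows $\phi_{k\i}(t_j)=\phi_{k\j}(0)$ for all $0\le j\le m$ (for $j=0$ take $\j=\i$).

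\textbf{Step 2: transfer to $\mathcal{A}_{\t}^n$.} Take $\j\in\mathcal{A}_{\t}^n$ and $k\in\Omega$. By (\ref{eq:A-t-n}) there are $\i\in\Omega_{\t}^n$ and $0\le j\le m$ with $\phi_{\j}(0)=\phi_{\i}(t_j)$. Applying $\phi_k$ gives
\[ \phi_{k\j}(0)=\phi_{k\i}(t_j), \]
and by Step 1 this point lies in $\{\phi_{\i'}(t_{j'}):\i'\in\Omega_{\t}^{n+1},\,0\le j'\le m\}=\{\phi_{\j'}(0):\j'\in\mathcal{A}_{\t}^{n+1}\}$.

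\textbf{Step 3: the conjugate case.} Replacing $t_j$ by $-t_j$ throughout Steps 1 and 2, the identical argument gives $\Omega\times\widehat{\mathcal{A}}_{\t}^n\subset\widehat{\mathcal{A}}_{\t}^{n+1}$.

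\textbf{Main obstacle.} Step 2 only shows $\phi_{k\j}(0)=\phi_{\j'}(0)$ for some $\j'\in\mathcal{A}_{\t}^{n+1}$. To conclude $k\j\in\mathcal{A}_{\t}^{n+1}$ we need $\j'=k\j$, i.e. that the map $\j\mapsto\phi_{\j}(0)$ is injective on $\Omega^{n+1}$. This holds because the IFS satisfies the SSC for $\beta<1/(N+1)$: the sets $\phi_{\j}(\Ga)$, $\j\in\Omega^{n+1}$, are pairwise disjoint. As a consequence $\mathcal{A}_{\t}^n$ and $\widehat{\mathcal{A}}_{\t}^n$ are uniquely determined by (\ref{eq:A-t-n}) and (\ref{eq:hat-A-t-n}), so the claimed inclusion is well posed.
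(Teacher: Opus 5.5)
Your proof is correct and follows the paper's argument essentially step for step. You left-extend $\i\in\Omega_{\t}^n$ to $k\i\in\Omega_{\t}^{n+1}$ via $\phi_k(E_n)\subset E_{n+1}$, push $\phi_{\j}(0)=\phi_{\i}(t_{j_0})$ through $\phi_k$, and repeat with $-t_j$ for $\widehat{\mathcal{A}}_{\t}^n$. Your remark that $\j\mapsto\phi_{\j}(0)$ is injective on $\Omega^{n+1}$ (by the SSC, since $\beta<1/(N+1)$) is exactly what the paper uses tacitly when it defines $\mathcal{A}_{\t}^{n}$ and concludes $i_0\j\in\mathcal{A}_{\t}^{n+1}$.
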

\begin{proof}
  Fix $n \ge \tau_{\t}$. Take any $\j \in \mathcal{A}_{\t}^n$, and
  there exist $\i \in \Omega_{\t}^n$ and $0\le j_0 \le m$ such that $\phi_{\j}(0) = \phi_{\i}(t_{j_0})$.
  Since $\i \in \Omega_{\t}^n$, we have $\phi_{\i}(t_k) \in E_n$ for all $1\le k \le m$.
  For any $i_0 \in \Omega$, note that \[ \phi_{i_0\i}(t_k) \in \phi_{i_0}(E_n) \subset E_{n+1}\quad \forall 1\le k \le m, \]
  and hence, we obtain $i_0 \i \in \Omega_{\t}^{n+1}$.
  Note that $\phi_{i_0 \j}(0) = \phi_{i_0 \i}(t_{j_0})$.
  Thus, we have $i_0 \j \in \mathcal{A}_{\t}^{n+1}$ for any $i_0 \in \Omega$.
  It follows that $\Omega \times \mathcal{A}_{\t}^n \subset \mathcal{A}_{\t}^{n+1}$.

  Similarly, we can show that $\Omega \times \widehat{\mathcal{A}}_{\t}^n \subset \widehat{\mathcal{A}}_{\t}^{n+1}$.
  Note that $\mathcal{W}_{\t}^n = \mathcal{A}_{\t}^n \cup \widehat{\mathcal{A}}_{\t}^n$. Thus, we conclude that $\Omega \times \mathcal{W}_{\t}^{n} \subset \mathcal{W}_{\t}^{n+1}$.
 
  We complete the proof.
\end{proof}

Recall that $\mathcal{B}_N$ is the set of $0< \beta < 1/(N+1)$ such that for any $n \in \N$, any $i_1,\ldots, i_n \in \{0,1,\ldots, 2N\}$, and any $j_1,\ldots, j_n \in \{0,1,\ldots,N\}$,
\[ \text{the equality}\;\;\sum_{k=1}^{n} i_k \beta^k = \sum_{k=1}^{n} j_k \beta^k \;\;\text{implies}\;\; i_k = j_k \;\;\forall 1 \le k \le n. \]
For $n \in \N$, $\beta_n$ is the unique root in $\big( 0, 1/(N+1) \big)$ of the equation \[ N \sum_{k=1}^{n} x^k + 2 N \sum_{k=n+1}^{\f} x^k = 1. \]
The mapping $\vartheta: \big(0, 1/(N+1) \big) \to \N \cup \{0\}$ is defined by
\[
\vartheta(\beta):=
\begin{cases}
  0, & \mbox{if } \beta \in \mathcal{B}_N, \\
  \min\big\{ n \in \N: \beta \le \beta_n \big\}, & \mbox{otherwise}.
\end{cases}
\]

\begin{lemma}\label{lemma:i-1-equal-j-1}
  Given $n > \tau_{\t}+\vartheta(\beta)$ and $0 \le j \le m$, for any $\i=i_1 i_2 \ldots i_n,  \j=j_1 j_2 \ldots j_n \in \Omega^n$, if $\phi_{\i}(t_{j}) = \phi_{\j}(0)$ then we have $i_1 = j_1$.
\end{lemma}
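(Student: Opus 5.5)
Throughout, $\t\in T^{m+1}$ and $\tau=\tau_{\t}$ as in the statement. The plan is to expand both sides of $\phi_{\i}(t_j)=\phi_{\j}(0)$ as digit sums in base $\beta$. We then compare the first digits, using $\mathcal{B}_N$ when $\vartheta(\beta)=0$ and the defining equation of $\beta_s$ when $\vartheta(\beta)=s\ge1$.

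\textbf{Step 1: digit expansion.} Since $t_j\in T_{\tau}$ (trivially so for $j=0$), we can write
\[ t_j=\beta^{-\tau}\frac{1-\beta}{N}\sum_{k=1}^{\tau}a_k\beta^{k-1} \]
with $a_k\in\Omega$. A direct computation gives $\phi_{\i}(x)=\frac{1-\beta}{N}\sum_{k=1}^n i_k\beta^{k-1}+\beta^n x$. Hence
\[ \phi_{\i}(t_j)=\frac{1-\beta}{N}\sum_{k=1}^{n}c_k\beta^{k-1}, \]
where $c_k=i_k$ for $k\le n-\tau$ and $c_k=i_k+a_{k-n+\tau}\in\{0,1,\ldots,2N\}$ for $n-\tau<k\le n$. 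The hypothesis therefore becomes
\[ \sum_{k=1}^n c_k\beta^{k-1}=\sum_{k=1}^n j_k\beta^{k-1}. \]
Since $n-\tau\ge1$, we have $c_1=i_1$, so it suffices to show $c_1=j_1$.

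\textbf{Step 2: the case $\beta\in\mathcal{B}_N$.} Multiply the identity by $\beta$. Because $c_k\in\{0,\ldots,2N\}$ and $j_k\in\{0,\ldots,N\}$, the definition of $\mathcal{B}_N$ gives $c_k=j_k$ for all $k$. In particular $c_1=j_1$, i.e.\ $i_1=j_1$.

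\textbf{Step 3: the case $\vartheta(\beta)=s\ge1$, so $\beta\le\beta_s$ and $n-\tau\ge s+1$.} Suppose $c_1\ne j_1$; there are two sub-cases.
\begin{itemize}
\item If $c_1>j_1$, then $1\le\sum_{k=2}^n (j_k-c_k)\beta^{k-1}$. Each $j_k-c_k\le N$, so
\[ 1\le N\sum_{k\ge1}\beta^k=\frac{N\beta}{1-\beta}<1, \]
where the last inequality uses $\beta<1/(N+1)$. This is a contradiction.
\item If $j_1>c_1$, then $1\le\sum_{k=2}^n(c_k-j_k)\beta^{k-1}$. Here $c_k-j_k\le N$ for $k\le n-\tau$ and $c_k-j_k\le 2N$ otherwise. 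Since the sum is finite and $\beta>0$,
\[ 1< N\sum_{k=1}^{n-\tau-1}\beta^k+2N\sum_{k=n-\tau}^{\infty}\beta^k\le N\sum_{k=1}^{s}\beta^k+2N\sum_{k=s+1}^{\infty}\beta^k. \]
The second inequality holds because $n-\tau-1\ge s$: replacing coefficients $N$ by $2N$ at positions $s+1,\ldots,n-\tau-1$ only increases the sum. The right-hand side is strictly increasing in $\beta$ and equals $1$ at $\beta=\beta_s$. Since $\beta\le\beta_s$, it is at most $1$, giving $1<1$, a contradiction.
\end{itemize}
Hence $c_1=j_1$, i.e.\ $i_1=j_1$.

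\textbf{Main obstacle.} The delicate point is the sub-case $j_1>c_1$ in Step 3. It needs the coefficient bound $N$, not $2N$, on at least $s$ positions after the first, which is exactly what $n>\tau+\vartheta(\beta)$ guarantees. It also needs strictness, which comes from the finiteness of the sum so that $\beta=\beta_s$ is allowed. In the $\mathcal{B}_N$ case the crude bound fails when $\beta>1/(2N+1)$, which is why the separate algebraic argument of Step 2 is used there.
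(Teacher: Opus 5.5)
Your proof is correct and follows essentially the same route as the paper: expand both sides in base $\beta$, use the definition of $\mathcal{B}_N$ when $\vartheta(\beta)=0$, and otherwise rule out $i_1\ne j_1$ by comparing with the equation defining $\beta_{\vartheta(\beta)}$. The differences are cosmetic. You dispose of $c_1>j_1$ and $j_1>c_1$ separately, where the paper first uses interval estimates to force $j_1=i_1+1$. You also compare the majorant series directly instead of invoking monotonicity of $(\beta_k)$.
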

\begin{proof}
Since $t_j \in T_{\tau_{\t}}$, there exists a unique word $t_{j,1} t_{j,2} \ldots t_{j,\tau_{\t}} \in \Omega^{\tau_{\t}}$ such that $t_j = \beta^{-\tau_{\t}} \phi_{t_{j,1} t_{j,2} \ldots t_{j,\tau_{\t}}} (0)$. Note that $\phi_{\i}(t_{j}) = \phi_{\i}(0) + \beta^{n-\tau_{\t}} \phi_{t_{j,1} t_{j,2} \ldots t_{j,\tau_{\t}}} (0)= \phi_{\j}(0)$.
That is,
\begin{equation}\label{eq:i-equal-j}
  \sum_{k=1}^{n} i_k \beta^n + \beta^{n-\tau_{\t}} \sum_{k=1}^{\tau_{\t}} t_{j,k} \beta^k = \sum_{k=1}^{n} j_k \beta^k.
\end{equation}

Suppose on the contrary that $i_1 \ne j_1$. By the definition of $\mathcal{B}_N$, we obtain that $\beta \not\in \mathcal{B}_N$.
So we have $\vartheta(\beta)\ge 1$ and $\beta \le \beta_{\vartheta(\beta)}$.
Using the fact that $0 < \beta < 1/(N+1)$, it is easy to estimate that
\[ \sum_{k=1}^{n} i_k \beta^n + \beta^{n-\tau_{\t}} \sum_{k=1}^{\tau_{\t}} t_{j,k} \beta^k \in \Big[ i_1 \beta, (i_1+2) \beta \Big) \quad\text{and}\quad \sum_{k=1}^{n} j_k \beta^k \in \Big[ j_1 \beta, (j_1+1) \beta \Big) . \]
Since $j_1 \ne i_1$, by (\ref{eq:i-equal-j}) we obtain that $j_1 = i_1 + 1$.
It follows from (\ref{eq:i-equal-j}) that
\[ (i_1+1) \beta \le \sum_{k=1}^{n} i_{k} \beta^n + \beta^{n-\tau_{\t}} \sum_{k=1}^{\tau_{\t}} t_{j,k} \beta^k. \]
That is,
\[ 1 \le \sum_{k=1}^{n-1} i_{k+1} \beta^n + \beta^{n-\tau_{\t}-1} \sum_{k=1}^{\tau_{\t}} t_{j,k} \beta^k < N \sum_{k=1}^{n-\tau_{\t}-1} \beta^k + 2N \sum_{k=n-\tau_{\t}}^\f \beta^k. \]
This implies that $\beta > \beta_{n-\tau_{\t}-1}$.
Note that the sequence $\{\beta_k\}_{k=1}^\f$ is increasing and $\vartheta(\beta) \le n - \tau_{\t}-1$.
We obtain that $\beta > \beta_{\vartheta(\beta)}$,
which contradicts the fact that $\beta \le \beta_{\vartheta(\beta)}$.
Thus, we conclude that $i_1 = j_1$.
\end{proof}

\begin{proposition}\label{prop:W-equality}
  Write $n_0 = \tau_{\t}+\vartheta(\beta)$.
  Then for $n \ge n_0$, we have \[ \mathcal{W}_{\t}^n = \Omega^{n-n_0} \times \mathcal{W}_{\t}^{n_0}. \]
\end{proposition}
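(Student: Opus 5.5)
By induction on $n$, it suffices to prove that for every $n>n_0$ we have $\mathcal{W}_{\t}^{n} = \Omega \times \mathcal{W}_{\t}^{n-1}$. Lemma \ref{lemma:W-subset} already gives the inclusion $\Omega\times\mathcal{W}_{\t}^{n-1}\subset \mathcal{W}_{\t}^{n}$, since $n-1\ge n_0\ge\tau_{\t}$. Once the reverse inclusion is established for each $n>n_0$, iterating yields $\mathcal{W}_{\t}^n=\Omega^{n-n_0}\times\mathcal{W}_{\t}^{n_0}$.

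For the reverse inclusion, I would treat $\mathcal{A}_{\t}^n$ and $\widehat{\mathcal{A}}_{\t}^n$ separately. Take $\j=j_1j_2\ldots j_n\in\mathcal{A}_{\t}^n$. Then $\phi_{\j}(0)=\phi_{\i}(t_{j_0})$ for some $\i=i_1\ldots i_n\in\Omega_{\t}^n$ and some $0\le j_0\le m$. Since $n>\tau_{\t}+\vartheta(\beta)$, Lemma \ref{lemma:i-1-equal-j-1} gives $i_1=j_1$. Write $\i=i_1\i'$ and $\j=j_1\j'$. Because $\phi_{i_1}$ is injective, the identity $\phi_{i_1}(\phi_{\i'}(t_{j_0}))=\phi_{i_1}(\phi_{\j'}(0))$ gives $\phi_{\i'}(t_{j_0})=\phi_{\j'}(0)$.

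It remains to check that $\i'\in\Omega_{\t}^{n-1}$, i.e. that $\phi_{\i'}(t_k)\in E_{n-1}$ for every $1\le k\le m$. We know $\phi_{\i}(t_k)=\phi_{\mathbf{k}}(0)$ for some $\mathbf{k}\in\Omega^n$, because $\i\in\Omega_{\t}^n$. Applying Lemma \ref{lemma:i-1-equal-j-1} again, now with index $k$, gives that $\mathbf{k}$ starts with $i_1$. Cancelling $\phi_{i_1}$ then gives $\phi_{\i'}(t_k)\in E_{n-1}$. Hence $\i'\in\Omega_{\t}^{n-1}$, so $\j'\in\mathcal{A}_{\t}^{n-1}$ and $\j\in\Omega\times\mathcal{A}_{\t}^{n-1}$. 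One small point is that the sets $\mathcal{A}$ are defined through the values $\phi_{\j}(0)$. Since $\beta<1/(N+1)$, the map $\j\mapsto\phi_{\j}(0)$ is injective on $\Omega^n$, so these sets are well defined and membership can be read off from the values.

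For $\widehat{\mathcal{A}}_{\t}^n$ we need the analogue of Lemma \ref{lemma:i-1-equal-j-1} for $-t_j$: if $\phi_{\i}(-t_j)=\phi_{\j}(0)$ with $n>n_0$, then $i_1=j_1$. Rewriting the hypothesis as $\phi_{\i}(0)=\phi_{\j}(t_j)$ reduces this to the lemma with the roles of $\i$ and $\j$ swapped. The lemma's proof only uses $\phi_{\i}(t_j)=\phi_{\j}(0)$ for arbitrary words $\i,\j\in\Omega^n$, so this swap is legitimate. The same cancellation argument then gives $\widehat{\mathcal{A}}_{\t}^n\subset\Omega\times\widehat{\mathcal{A}}_{\t}^{n-1}$.

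The step I expect to be the main obstacle is verifying that $\i'$ inherits membership in $\Omega_{\t}^{n-1}$, since this requires applying the first-letter lemma to every $t_k$ and not just to $t_{j_0}$. The lemma is stated for arbitrary $0\le j\le m$, so this should go through.
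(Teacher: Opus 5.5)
Your proposal is correct and follows the paper's proof essentially step for step. It gets $\Omega^{n-n_0}\times\mathcal{W}_{\t}^{n_0}\subset\mathcal{W}_{\t}^n$ from Lemma~\ref{lemma:W-subset}, then proves the reverse inclusion $\mathcal{W}_{\t}^n\subset\Omega\times\mathcal{W}_{\t}^{n-1}$ by applying Lemma~\ref{lemma:i-1-equal-j-1} to $t_{j_0}$ and to every $t_k$, and treats $\widehat{\mathcal{A}}_{\t}^n$ by rewriting $\phi_{\i}(-t_j)=\phi_{\j}(0)$ as $\phi_{\j}(t_j)=\phi_{\i}(0)$ and swapping roles; your remark on the injectivity of $\j\mapsto\phi_{\j}(0)$ on $\Omega^n$ (valid since $\beta<1/(N+1)$) only makes explicit what the paper uses implicitly.
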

\begin{proof}
For $n \ge n_0$, by Lemma \ref{lemma:W-subset} we have $\Omega^{n-n_0} \times \mathcal{W}_{\t}^{n_0} \subset \mathcal{W}_{\t}^n$.
It suffices to show that $\mathcal{W}_{\t}^n \subset \Omega^{n-n_0} \times \mathcal{W}_{\t}^{n_0}$ for $n \ge n_0$.
To this end, it is enough to prove that $\mathcal{W}_{\t}^n \subset \Omega \times \mathcal{W}_{\t}^{n-1}$ for $n > n_0$.
In the following, we fix $n > n_0$.

Take any $\j=j_1 j_2 \ldots j_n \in \mathcal{A}_{\t}^n$.
Then there exist $\i=i_1 i_2 \ldots i_n \in \Omega_{\t}^n$ and $0\le j_0 \le m$ such that $\phi_{\j}(0) = \phi_{\i}(t_{j_0})$.
By Lemma \ref{lemma:i-1-equal-j-1} we have $j_1 = i_1$, and hence $\phi_{j_2 \ldots j_n}(0) = \phi_{i_2 \ldots i_n}(t_{j_0})$.
To verify that $j_2 \ldots j_n \in \mathcal{A}_{\t}^{n-1}$, it suffices to show that $i_2 \ldots i_n \in \Omega_{\t}^{n-1}$.
For any $1 \le k \le m$, since $\i \in \Omega_{\t}^n$, we have $\phi_{\i}(t_k) \in E_n$. Then there exists $\j'=j_1' j_2' \ldots j_n' \in \Omega^n$ (depending on $k$) such that $\phi_{\i}(t_{k}) = \phi_{\j'}(0)$. By Lemma \ref{lemma:i-1-equal-j-1} we have $i_1 = j_1'$, and hence $\phi_{i_2 \ldots i_n}(t_{k}) = \phi_{j_2' \ldots j_n'}(0)$. So we obtain that $\phi_{i_2 \ldots i_n}(t_{k}) \in E_{n-1}$ for all $1\le k \le m$, which implies that $i_2 \ldots i_n \in \Omega_{\t}^{n-1}$. Thus we have shown that $j_2 \ldots j_n \in \mathcal{A}_{\t}^{n-1}$.
Since $\j$ is arbitrarily chosen, we conclude that $\mathcal{A}_{\t}^n \subset \Omega \times \mathcal{A}_{\t}^{n-1}$.

We can show that $\widehat{\mathcal{A}}_{\t}^n \subset \Omega \times \widehat{\mathcal{A}}_{\t}^{n-1}$ using almost identical arguments.
Take any $\j=j_1 j_2 \ldots j_n \in \widehat{\mathcal{A}}_{\t}^n$.
Then there exist $\i=i_1 i_2 \ldots i_n \in \widehat{\Omega}_{\t}^n$ and $0\le j_0 \le m$ such that $\phi_{\j}(0) = \phi_{\i}(-t_{j_0})$, i.e., $\phi_{\j}(t_{j_0}) = \phi_{\i}(0)$.
By Lemma \ref{lemma:i-1-equal-j-1} we have $j_1 = i_1$, and hence $\phi_{j_2 \ldots j_n}(t_{j_0}) = \phi_{i_2 \ldots i_n}(0)$. That is, $\phi_{j_2 \ldots j_n}(0) = \phi_{i_2 \ldots i_n}(-t_{j_0})$.
To verify that $j_2 \ldots j_n \in \widehat{\mathcal{A}}_{\t}^{n-1}$, it is enough to show that $i_2 \ldots i_n \in \widehat{\Omega}_{\t}^{n-1}$.
For any $1 \le k \le m$, since $\i \in \widehat{\Omega}_{\t}^n$, we have $\phi_{\i}(-t_k) \in E_n$. Then there exists $\j'=j_1' j_2' \ldots j_n' \in \Omega^n$ (depending on $k$) such that $\phi_{\i}(-t_{k}) = \phi_{\j'}(0)$, i.e., $\phi_{\i}(0) = \phi_{\j'}(t_k)$. By Lemma \ref{lemma:i-1-equal-j-1} we have $i_1 = j_1'$, and hence $\phi_{i_2 \ldots i_n}(0) = \phi_{j_2' \ldots j_n'}(t_k)$. That is, $\phi_{i_2 \ldots i_n}(-t_k) = \phi_{j_2' \ldots j_n'}(0)$. So we obtain that $\phi_{i_2 \ldots i_n}(-t_{k}) \in E_{n-1}$ for all $1\le k \le m$, which implies that $i_2 \ldots i_n \in \widehat{\Omega}_{\t}^{n-1}$. Thus we have verified that $j_2 \ldots j_n \in \widehat{\mathcal{A}}_{\t}^{n-1}$.
Since $\j$ is arbitrary, we conclude that $\widehat{\mathcal{A}}_{\t}^n \subset \Omega \times \widehat{\mathcal{A}}_{\t}^{n-1}$.

Note that $\mathcal{W}_{\t}^n = \mathcal{A}_{\t}^n \cup \widehat{\mathcal{A}}_{\t}^n$.
It follows that $\mathcal{W}_{\t}^n \subset \Omega \times \mathcal{W}_{\t}^{n-1}$.
The proof is complete.
\end{proof}

Recall the construction of the directed graph $G_{\t}=(V_{\t}, E_{\t})$, where
$V_{\t} = \Omega^{\tau_{\t}+ \vartheta(\beta)} \setminus W_{\t}^{\tau_{\t}+ \vartheta(\beta)}$, and for $\i=i_1 i_2 \ldots i_{\tau_{\t}+ \vartheta(\beta)},\; \j=j_1 j_2 \ldots j_{\tau_{\t}+ \vartheta(\beta)} \in V_{\t}$, the edge from $\i$ to $\j$ belongs to $E_{\t}$ if and only if $i_2 \ldots i_{\tau_{\t}+ \vartheta(\beta)} = j_1 \ldots j_{\tau_{\t}+ \vartheta(\beta) -1}$.

\begin{proof}[Proof of Theorem \ref{thm:graph-criterion}]
  Write $n_0 = \tau_{\t}+\vartheta(\beta)$. 
  Note that $\t \in T^{m+1}$. 
  We first claim that $\t$ is an admissible translation vector if and only if there exists $\ell \ge n_0$ such that \[\bigcup_{n=n_0}^\ell \mathcal{W}_{\t}^n \times \Omega^{\ell-n} = \Omega^\ell. \]
  By Proposition \ref{prop:symbol-criterion}, the sufficiency is obvious.
  Next, suppose that $\t$ is an admissible translation vector. Then by Proposition \ref{prop:symbol-criterion} there exists $\ell \ge \tau_{\t}$ such that
  \[ \bigcup_{n=\tau_{\t}}^\ell \mathcal{W}_{\t}^n \times \Omega^{\ell-n} = \Omega^\ell. \]
  Let $\ell'= \ell+\vartheta(\beta)$.
  By Lemma \ref{lemma:W-subset}, we have
  \[ \bigcup_{n=n_0}^{\ell'} \mathcal{W}_{\t}^n \times \Omega^{\ell'-n} = \bigcup_{n=\tau_{\t}}^\ell \mathcal{W}_{\t}^{n+\vartheta(\beta)} \times \Omega^{\ell-n} \supset \bigcup_{n=\tau_{\t}}^\ell \Omega^{\vartheta(\beta)} \times \mathcal{W}_{\t}^{n} \times \Omega^{\ell-n} = \Omega^{\vartheta(\beta)} \times \Omega^{\ell} = \Omega^{\ell'}. \]
  The inverse inclusion is clear.
  Thus, we have proved the claim.

  By the claim and Proposition \ref{prop:W-equality}, $\t$ is an admissible translation vector if and only if there exists $\ell \ge n_0$ such that
  \begin{equation}\label{eq:symbol-criterion-2}
    \bigcup_{n=n_0}^\ell \Omega^{n-n_0} \times \mathcal{W}_{\t}^{n_0} \times \Omega^{\ell-n} = \Omega^\ell.
  \end{equation}
  It remains to show that (\ref{eq:symbol-criterion-2}) holds if and only if the directed graph $G_{\t}$ contains no cycles, which has already been proved in \cite[Proposition 1.3]{KLWYZ-2024}. For completeness, we adapt the proof of \cite[Proposition 1.3]{KLWYZ-2024}.

  Suppose that (\ref{eq:symbol-criterion-2}) does not hold for any $\ell \ge n_0$. 
  Choose a sufficiently large integer $\ell$. 
  We can find a finite word $i_1 i_2 \ldots i_\ell \in \Omega^{\ell}$ such that $i_1 i_2 \ldots i_\ell \notin \bigcup_{n=n_0}^\ell \Omega^{n-n_0} \times \mathcal{W}_{\t}^{n_0} \times \Omega^{\ell-n}$.
  This means that $i_{k} i_{k+1} \ldots i_{k+n_0 - 1} \notin \mathcal{W}_{\t}^{n_0}$ for any $1\le k \le \ell- n_0+1$. That is, $i_{k} i_{k+1} \ldots i_{k+n_0-1} \in V_{\t}$ for $1\le k \le \ell - n_0+1$.
  So the directed graph $G_{\t}$ contains the path 
  \begin{equation}\label{eq:path}
    i_1 i_2 \ldots i_{n_0} \longrightarrow i_2 i_3 \ldots i_{n_0+1} \longrightarrow i_3 i_4 \ldots i_{n_0+2} \longrightarrow \cdots\cdots \longrightarrow i_{\ell-n_0 + 1} i_{\ell-n_0 + 2} \ldots  i_\ell
  \end{equation}
  with length $\ell - n_0$. 
  The integer $\ell$ can be chosen such that $\ell-n_0 > (N+1)^{n_0}$, which guarantees that some vertex appears at least twice in the path in (\ref{eq:path}).
  Thus, the directed graph $G_{\t}$ contains a cycle.

  Next, suppose that the directed graph $G_{\t}$ contains a cycle.
  This implies that the directed graph $G_{\t}$ contains paths with arbitrary length.
  For any $\ell \ge n_0$, we can find a finite word $i_1 i_2 \ldots i_\ell \in \Omega^{\ell}$ such that $i_{k} i_{k+1} \ldots i_{k+n_0-1} \in V_{\t} = \Omega^{n_0} \setminus \mathcal{W}_{\t}^{n_0}$ for $1\le k \le \ell - n_0+1$.
  This means that $i_1 i_2 \ldots i_\ell \notin \bigcup_{n=n_0}^\ell \Omega^{n-n_0} \times \mathcal{W}_{\t}^{n_0} \times \Omega^{\ell-n}$.
  Thus, we obtain that (\ref{eq:symbol-criterion-2}) does not hold for any $\ell \ge n_0$.
  The proof is complete.
\end{proof}

\section{Proof of Corollary \ref{cor:m=1} and Examples}\label{sec:4}

In this section, we first apply Theorems \ref{thm:union-self-similar} and \ref{thm:graph-criterion} to prove Corollary \ref{cor:m=1}.
Recall that $\mathcal{B}_N$ is the set of $0< \beta < 1/(N+1)$ such that for any $n \in \N$, any $i_1,\ldots, i_n \in \{0,1,\ldots, 2N\}$, and any $j_1,\ldots, j_n \in \{0,1,\ldots,N\}$,
\[ \text{the equality}\;\;\sum_{k=1}^{n} i_k \beta^k = \sum_{k=1}^{n} j_k \beta^k \;\;\text{implies}\;\; i_k = j_k \;\;\forall 1 \le k \le n. \]

\begin{proof}[Proof of Corollary \ref{cor:m=1}]
Note first that $\beta \in \mathcal{B}_N$.
Write $\t=(t_0,t_1)$ with $0=t_0<t_1 = t$. Then we have $\Ga_{\t} = \Ga \cup (\Ga+t)$.
Note that $\widehat{\t} = \t$.
By Theorem \ref{thm:union-self-similar}, $\Ga_{\t}$ is a self-similar set if and only if $\t$ is an admissible translation vector.

In the following, we assume that $t \in T$ and write $\tau=\tau_{\t}$.
Recall that $\tau\in \N$ is the smallest integer such that $t \in T_{\tau}$.
There exists a unique finite word $\j = j_1 j_2 \ldots j_{\tau} \in \Omega^{\tau}$ such that $t = \beta^{-\tau}\phi_{\j}(0)$.
Note by the minimality of $\tau$ that $j_1 \ne 0$.
For $\i= i_1 i_2 \ldots i_{\tau} \in \Omega^{\tau}$, if $\phi_{\i}(t) = \phi_{\i}(0) + \phi_{\j}(0) \in E_{\tau}$, then noting that $\beta \in \mathcal{B}_N$ we have $(i_1 +j_1)(i_2+j_2) \ldots (i_{\tau} + j_{\tau}) \in \Omega^{\tau}$.
Thus, we have
\[\Omega_{\t}^{\tau} =\{ \i \in \Omega^{\tau}: \phi_{\i}(t) \in E_{\tau} \big\} = \big\{ i_1 i_2 \ldots i_{\tau} \in \Omega^{\tau}: i_{k} \le N- j_k \;\;\forall 1 \le k \le \tau \}.\]
It follows that $\mathcal{A}_{\t}^{\tau} = \Omega_{\t}^{\tau_{\t}} \cup \{ (i_1 +j_1)(i_2+j_2) \ldots (i_{\tau} + j_{\tau}): i_1 i_2 \ldots i_{\tau} \in \Omega_{\t}^{\tau} \}$. That is,
\[ \mathcal{A}_{\t}^{\tau} = \{ i_1 \ldots i_{\tau} \in \Omega^{\tau}: i_{k} \le N- j_k \;\forall 1 \le k \le \tau \} \cup \{ i_1 \ldots i_{\tau} \in \Omega^{\tau}:  i_{k} \ge j_k \;\forall 1 \le k \le \tau \}. \]
In the same way, we can obtain that
\[ \widehat{\Omega}_{\t}^{\tau} =\{ \i \in \Omega^{\tau}: \phi_{\i}(-t) \in E_{\tau} \big\} = \big\{ i_1 i_2 \ldots i_{\tau} \in \Omega^{\tau}:  i_{k} \ge j_k \;\;\forall 1 \le k \le \tau \},\]
and
\[ \widehat{\mathcal{A}}_{\t}^{\tau} = \widehat{\Omega}_{\t}^{\tau} \cup \{ (i_1 -j_1)(i_2-j_2) \ldots (i_{\tau} - j_{\tau}): i_1 i_2 \ldots i_{\tau} \in \widehat{\Omega}_{\t}^{\tau} \} = \mathcal{A}_{\t}^{\tau}.  \]
Thus, we conclude that
\begin{equation}\label{eq:W-t-m=1}
  \begin{split}
     \mathcal{W}_{\t}^{\tau} &= \{ i_1 i_2 \ldots i_{\tau} \in \Omega^{\tau}: i_{k} \le N- j_k \;\forall 1 \le k \le \tau \} \\
     & \quad\quad\quad\cup \{ i_1 i_2\ldots i_{\tau} \in \Omega^{\tau}:  i_{k} \ge j_k \;\forall 1 \le k \le \tau \}.
  \end{split}
\end{equation}
Since $\beta \in \mathcal{B}_N$, we have $\vartheta(\beta) =0$.
Recall the construction of the directed graph $G_{\t}=(V_{\t}, E_{\t})$ where $V_{\t} = \Omega^{\tau} \setminus \mathcal{W}_{\t}^{\tau}$.
By Theorem \ref{thm:graph-criterion}, $\t$ is an admissible translation vector if and only if the directed graph $G_{\t}$ contains no cycles.
The following arguments are adapted from the proof of \cite[Corollary 1.6]{KLWYZ-2024}.

\textbf{Case (i)}: $\tau=1$, or $\tau\ge 2$ and $j_k =0$ for $2\le k \le \tau$. Then by (\ref{eq:W-t-m=1}) we have
\[ \mathcal{W}_{\t}^{\tau} = \big( \{0,1,\ldots, N-j_{1}\} \cup \{ j_{1}, j_{1}+1, \ldots, N \} \big) \times \Omega^{\tau-1}. \]
In this case, the directed graph $G_{\t}$ contains no cycles if and only if
\[ \{0,1,\ldots, N-j_{1}\} \cup \{ j_{1}, j_{1}+1, \ldots, N \} = \Omega, \]
which is equivalent to $j_1 \le N - j_1 + 1$, i.e., $j_1\le (N+1)/2$.

\textbf{Case (ii)}: $\tau\ge 2$ and there exists $2\le q \le \tau$ such that $j_{q} \ne 0$.
For $\i=i_1 i_2 \ldots i_\tau \in \Omega^{\tau}$, if $\i \in \mathcal{W}_{\t}^{\tau}$ then by (\ref{eq:W-t-m=1}) we have either $i_1\le N- j_1,\; i_q \le N - j_q$ or $i_1\ge j_1,\; i_q \ge j_q$. Note that $j_1 \ne 0$ and $j_q \ne 0$.
Thus, for $\i=i_1 i_2 \ldots i_\tau \in \Omega^{\tau}$, if $(i_1, i_q) = (0,N)$ or $(i_1, i_q) =(N,0)$, then $\i \not\in \mathcal{W}_{\t}^{\tau}$.
Let $(i_k)_{k=1}^\f = 0^{q-1}N^{q-1} 0^{q-1}N^{q-1} \ldots$.
Then for any $k \in \N$, we have $i_k i_{k+1} \ldots i_{k+\tau-1} \notin \mathcal{W}_{\t}^{\tau}$, i.e., $i_k i_{k+1} \ldots i_{k+\tau-1} \in E_{\t}$.
Thus, the directed graph $G_{\t}$ contains the following cycle
\[ i_1 i_2 \ldots i_{\tau} \longrightarrow i_2 i_3 \ldots i_{\tau+1} \longrightarrow i_3 i_4 \ldots i_{\tau+2} \longrightarrow \cdots\cdots \longrightarrow i_{2q-1} i_{2q} \ldots i_{2q+\tau-2}. \]

Therefore, we conclude that for $t >0$, the union $\Ga \cup (\Ga+t)$ is a self-similar set if and only if \[ t=\frac{j_1(1-\beta)}{N} \beta^{-\tau}, \]
where $j_1 \in \big\{ 1, 2, \ldots, \lfloor\frac{N+1}{2}\rfloor \big\}$ and $\tau\in\N$.
\end{proof}

Next, we apply Theorems \ref{thm:union-self-similar} and \ref{thm:graph-criterion} to provide two examples to illustrate the self-similarity of the union $\Ga \cup (\Ga + t)$.

\begin{example}\label{example:not-in-B}
  Fix $N \in \N$ and let $\beta \in \big( 0, 1/(N+1) \big)$ satisfy
  \begin{equation}\label{eq:example-beta}
    (N+1) \beta + N \beta^2 = 1.
  \end{equation}
  Note that $\beta \notin \mathcal{B}_N$ and $\vartheta(\beta)=1$. Let $\Ga$ be the attractor of the self-similar IFS in (\ref{eq:IFS-Ga}).
  Let \[ t = \beta^{-2} \phi_{NN}(0) = \frac{1-\beta^2}{\beta^2}.\]
  Then the union $\Ga \cup (\Ga+t)$ is a self-similar set.
  By Corollary \ref{cor:scale}, the union $\Ga \cup (\Ga+\beta^{-k}t)$ is also a self-similar set for any $k\in \N$.

  Write $\t=(0,t)$. Then we have $\tau_{\t}=2$.
  For $\i \in \Omega^3$, we have $\phi_{\i}(t) = \phi_{\i}(0) + \phi_{0NN}(0)$ and $\phi_{\i}(-t) = \phi_{\i}(0) - \phi_{0NN}(0)$.
  Using (\ref{eq:example-beta}), we obtain that
  \begin{align*}
  \Omega_{\t}^{3} & = \{ i_1 00: 0 \le i_1 \le N\} \cup \{ i_1 i_2 i_3: \; 0\le i_1 \le N-1,\; 1\le i_2 \le N,\; 0\le i_3 \le N  \}, \\
  \widehat{\Omega}_{\t}^{3} & = \{ i_1NN: 0 \le i_1 \le N\} \cup \{ i_1 i_2 i_3: \; 1\le i_1 \le N,\; 0\le i_2 \le N-1,\; 0\le i_3 \le N  \}.
  \end{align*}
  It follows that
  \begin{align*}
    \mathcal{W}_{\t}^3 & = \{ i_1 00, i_1 NN: 0 \le i_1 \le N\} \cup \{ i_1 i_2 i_3: \; 0\le i_1 \le N-1,\; 1\le i_2 \le N,\; 0\le i_3 \le N  \}\\
    & \hspace{8em} \cup \{ i_1 i_2 i_3: \; 1\le i_1 \le N,\; 0\le i_2 \le N-1,\; 0\le i_3 \le N  \}.
  \end{align*}
  Thus, in the directed graph $G_{\t} = (V_{\t},E_{\t})$, we have
  \[ V_{\t}= \Omega^3 \setminus \mathcal{W}_{\t}^3 =\{ 00i_3: 1\le i_3 \le N \} \cup \{ NN i_3: 0\le i_3 \le N-1 \}. \]
  One can check that $E_{\t}=\emptyset$.
  This means that the directed graph $G_{\t}$ contains no cycles.
  By Theorems \ref{thm:union-self-similar} and \ref{thm:graph-criterion}, the union $\Ga \cup (\Ga+t)$ is a self-similar set.
\end{example}

\begin{example}\label{example-2:not-in-B}
  Fix an integer $N \ge 2$ and let $\beta \in \big( 0, 1/(N+1) \big)$ satisfy
  \begin{equation}\label{eq:example-beta-2}
    (N+1) \beta + \beta^2 = 1.
  \end{equation}
  Note that $\beta \notin \mathcal{B}_N$ and $\vartheta(\beta)=2$. Let $\Ga$ be the attractor of the self-similar IFS in (\ref{eq:IFS-Ga}).
  Let \[ t = \beta^{-1} \phi_{N}(0) = \frac{1-\beta}{\beta} .\]
  Then the union $\Ga \cup (\Ga+t)$ is not a self-similar set, but the union $\Ga \cup (\Ga+ \beta^{-k}t)$ is a self-similar set for any $k \in \N$.

  Write $\t=(0,t)$. Then we have $\tau_{\t}=1$.
  For $\i \in \Omega^3$, we have $\phi_{\i}(t) = \phi_{\i}(0) + \phi_{00N}(0)$ and $\phi_{\i}(-t) = \phi_{\i}(0) - \phi_{00N}(0)$.
  It is easy to check that $\Omega_{\t}^{3} = \Omega^2 \times \{0\}$ and $\widehat{\Omega}_{\t}^{3} = \Omega^2 \times \{N\}$.
  Thus, we obtain that $\mathcal{W}_{\t}^3 = \Omega^2 \times \{0,N\}$.
  Note that $N \ge 2$. In the directed graph $G_{\t}=(V_{\t},E_{\t})$, we have $111 \in V_{\t}=\Omega^3 \setminus \mathcal{W}_{\t}^3$. Thus, the directed graph $G_{\t}$ contains a cycle. By Theorems \ref{thm:union-self-similar} and \ref{thm:graph-criterion}, the union $\Ga \cup (\Ga+t)$ is not a self-similar set.

  Write $\t'=(0,\beta^{-1}t)$. Then we have $\tau_{\t'}=2$.
  For $\i \in \Omega^4$, we have $\phi_{\i}(\beta^{-1}t) = \phi_{\i}(0) + \phi_{00N0}(0)$ and $\phi_{\i}(-t) = \phi_{\i}(0) - \phi_{00N0}(0)$.
  Using (\ref{eq:example-beta-2}), we obtain that
  \begin{align*}
  \Omega_{\t'}^{4} & = \{ i_1 i_2 0 i_4: 0 \le i_1,i_2,i_4 \le N\} \cup \{ i_1 N i_3 i_4:  0\le i_1 \le N-1,\; 2\le i_3 \le N,\; 1\le i_4 \le N  \}  \\
  & \hspace{6em} \cup \{ i_1 i_2 i_3 i_4: 0\le i_1 \le N,\; 0\le i_2 \le N-1,\; 1\le i_3 \le N,\; 1\le i_4 \le N  \}, \\
  \widehat{\Omega}_{\t'}^{4} & = \{ i_1 i_2 N i_4: 0 \le i_1,i_2,i_4 \le N\} \cup \{ i_1 0 i_3 i_4: 1\le i_1 \le N,\; 0\le i_3 \le N-2,\; 0\le i_4 \le N-1  \} \\
  & \hspace{5em} \cup \{ i_1 i_2 i_3 i_4: \; 0\le i_1 \le N,\; 1\le i_2 \le N,\; 0\le i_3 \le N-1,\; 0\le i_4 \le N-1  \}.
  \end{align*}
  It follows that $\mathcal{W}_{\t'}^4 = \Omega_{\t'}^{4} \cup \widehat{\Omega}_{\t'}^{4}$.
  In the directed graph $G_{\t'}=(V_{\t'},E_{\t'})$, we have
  \[ V_{\t'} = \Omega^4 \setminus \mathcal{W}_{\t'}^4 = \{ N N i_3 N, 0 0 i_3 0: 1\le i_3 \le N-1 \} \cup \{ i_1 N 1 N, i_10(N-1)0: 0 \le i_1 \le N \}. \]
  One can check that $E_{\t'} =\emptyset$.
  Thus, the directed graph $G_{\t'}$ contains no cycles. By Theorems \ref{thm:union-self-similar} and \ref{thm:graph-criterion}, the union $\Ga \cup (\Ga+\beta^{-1}t)$ is a self-similar set.
  By Corollary \ref{cor:scale}, the union $\Ga \cup (\Ga+\beta^{-k}t)$ is a self-similar set for any $k\in \N$.
\end{example}

We conclude this section with some remarks.
As shown in Examples \ref{example:not-in-B} and \ref{example-2:not-in-B}, the self-similarity of the union $\Ga \cup (\Ga +t)$ becomes intricate when $\beta \notin \mathcal{B}_N$, primarily depending on the algebraic equation satisfied by $\beta$.
As in \cite[Lemma 4.3]{KLWYZ-2024}, we can show that for $\beta \in \mathcal{B}_N$ and for $\t \in T^{m+1}$, $\t$ is an admissible translation vector if and only if $\beta^{-k} \t$ is an admissible translation vector for any $k \in \N$.
However, by Example \ref{example-2:not-in-B}, this necessity no longer holds when $\beta \not\in \mathcal{B}_N$.

\section{Proof of Theorems \ref{thm:translation-countable} and \ref{thm:not-self-similar}}\label{sec:5}

In this section, we will prove Theorems \ref{thm:translation-countable} and \ref{thm:not-self-similar}.
For a similitude $f$ on $\R$, let $\rho_f$ denote its linear coefficient.

\begin{proof}[Proof of Theorem \ref{thm:translation-countable}]
  Recall that $K \subset \R$ is a self-similar set that satisfies the SSC.
  By \cite[Proposition 4.3 (i)]{Elekes-Keleti-Mathe-2010}, the set
  \[ \mathcal{S} = \big\{ f: \; f\; \text{is a similitude on}\; \R,\;\text{and}\; f(K) \subset K \big\} \]
  is discrete as a subset of $\R^2$ and, hence, is countable.
  It follows that the set \[ \mathcal{T} = \big\{ \rho^{-1} \big( f_1(0) - f_2(0) \big): f_1, f_2 \in \mathcal{S},\; \rho = \rho_{f_1} = \rho_{f_2} \big\} \]
  is at most countable.

  Write $\t=(t_0,t_1, \ldots, t_m) \in \R^{m+1}$, where $0=t_0< t_1 < \cdots < t_m$.
  Suppose that the union $K_{\t} = \bigcup_{j=0}^m (K+t_j)$ is a self-similar set.
  By Proposition \ref{prop:union-similarity-general}, there exists a finite set $\mathcal G$ of similitudes such that \[ \bigcup_{g\in\mathcal G} g(K_{\t})=K. \]
  Note that $K \subset K_{\t}$. We conclude that $\mathcal{G} \subset \mathcal{S}$.
  Choose arbitrarily $g \in \mathcal{G}$ and fix $1 \le j \le m$.
  Note that $g(K+t_j) \subset g(K_{\t}) \subset K$.
  Then there exists $\widetilde{g} \in \mathcal{G}$ such that $\rho_{\widetilde{g}} = \rho_{g}$ and $g(x+t_j) = \widetilde{g}(x)$.
  It follows that \[ t_j = \rho_{g}^{-1}\big( \widetilde{g}(0) - g(0) \big) \in \mathcal{T}. \]
  Thus, we obtain that $(t_1,\ldots,t_m) \in \mathcal{T}^m$, which is at most countable.
  We complete the proof.
\end{proof}

The proof of Theorem \ref{thm:not-self-similar} is divided into two parts.

\begin{proof}[Proof of Theorem \ref{thm:not-self-similar} {\rm(i)}]
  Recall that $K \subset \R$ is the attractor of the self-similar IFS $\big\{ \varphi_{0}(x) = \lambda_0 x,\; \varphi_{1}(x) = \lambda_1 x + 1- \lambda_1 \big\}$, where $\lambda_0,\lambda_1>0$, $\lambda_0 + \lambda_1 < 1$, and $\log \lambda_0/ \log \lambda_1 \not\in \Q$.
  For $\i = i_1 i_2 \ldots i_n \in \bigcup_{k=1}^\f\{0,1\}^k$, write $\varphi_{\i} = \varphi_{i_1} \circ \varphi_{i_2} \circ \cdots \circ \varphi_{i_n}$.
  Clearly, we have $\lambda_0 \ne \lambda_1$.
  By \cite[Corollary 1.3 (i)]{Wang-2025}, every contractive similitude $f$ on $\R$ satisfying $f(K) \subset K$ has the form $f(x)=\varphi_{\i}(x)$ for some $\i \in \bigcup_{k=1}^\f\{0,1\}^k$.
  Let $\t=(t_0,t_1,\ldots, t_m)\in\R^{m+1}$ with $0=t_0<t_1<\cdots<t_m$

  Suppose on the contrary that the union $K_{\t}=\bigcup_{j=0}^m (K+t_j)$ is a self-similar set.
  By Proposition \ref{prop:union-similarity-general}, there exists a finite set $\mathcal{G}$ of similitudes such that
  \begin{equation}\label{eq:G-K-2}
    \bigcup_{ g \in \mathcal{G}} g(K_{\t}) = K.
  \end{equation}
  Note that $K \subset K_{\t}$. Each $g \in \mathcal{G}$ has the form $g(x)=\varphi_{\i}(x)$ for some $\i \in \bigcup_{k=1}^\f\{0,1\}^k$.
  Note that $0 \in K$. By (\ref{eq:G-K-2}), there exists $\i \in \bigcup_{k=1}^\f\{0,1\}^k$ such that $0 \in \varphi_{\i}(K_{\t}) \subset K$.
  Note also that $0$ is the minimum of $K$. Thus, we obtain that $\varphi_{\i}(0) = 0$, which implies that $\i=0^n$, where $n \in \N$ is the length of $\i$.
  Note that $\varphi_{\i}(K+t_m) \subset \varphi_{\i}(K_{\t}) \subset K$.
  Then there exists $\j \in \bigcup_{k=1}^\f\{0,1\}^k$ such that $\varphi_{\i}(x+t_m) = \varphi_{\j}(x)$. This implies that $\rho_{\varphi_{\j}} = \rho_{\varphi_{\i}}= \lambda_0^n$.
  Since $\log \lambda_0/ \log \lambda_1 \not\in \Q$, we conclude that $\j = \i =0^n$.
  So we obtain $t_m=0$, a contradiction.
  Therefore, we conclude that the union $K_{\t}$ is not a self-similar set.
\end{proof}

\begin{proof}[Proof of Theorem \ref{thm:not-self-similar} {\rm(ii)}]
  Recall that $K \subset \R$ is the attractor of the self-similar IFS $\big\{ \varphi_{0}(x) = \lambda x,\; \varphi_{1}(x) = \lambda x + \zeta,\;\varphi_2(x) = \lambda x + 1- \lambda \big\}$, where $0< \lambda < 1/3$, $\lambda < \zeta < 1-2\lambda$, and $\zeta\not\in \Q(\lambda)$.
  For $\i = i_1 i_2 \ldots i_n \in \bigcup_{k=1}^\f\{0,1,2\}^k$, let $\Lambda_{\i} = \{ 1 \le k \le n: i_k =1\}$ and $\varphi_{\i} = \varphi_{i_1} \circ \varphi_{i_2} \circ \cdots \circ \varphi_{i_n}$. Then we have
  \[ \varphi_{\i}(0) = \frac{1-\lambda}{2} \sum_{k \in \{1,2,\ldots,n\} \setminus\Lambda_{\i}} i_k\lambda^{k-1} + \zeta \sum_{k \in \Lambda_{\i}} \lambda^{k-1}. \]
  Clearly, we have $\zeta \ne (1-\lambda)/2$.
  By \cite[Theorem 1.1 (i)]{Wang-2025}, every contractive similitude $f$ satisfying $f(K) \subset K$ has the form $f(x)=\varphi_{\i}(x)$ foe some $\i \in \bigcup_{k=1}^\f\{0,1,2\}^k$.

  For $t >0$, suppose on the contrary that the union $K \cup (K+t)$ is a self-similar set.
  By Proposition \ref{prop:union-similarity-general}, there exists a finite set $\mathcal{G}$ of similitudes such that
  \begin{equation}\label{eq:G-K-3}
    \bigcup_{ g \in \mathcal{G}} g\big( K \cup (K+t) \big) = K.
  \end{equation}
  Note that $g(K) \subset K$ for $g \in \mathcal{G}$. So each $g \in \mathcal{G}$ has the form $g(x)=\varphi_{\i}(x)$ for some $\i \in \bigcup_{k=1}^\f\{0,1,2\}^k$.
  Let $x_0 = \zeta/(1-\lambda)$, which is the fixed point of $\varphi_1$. 
  By (\ref{eq:G-K-3}), there exists $\i \in \bigcup_{k=1}^\f\{0,1,2\}^k$ such that $x_0 \in \varphi_{\i}\big( K \cup (K+t) \big) \subset K$.
  There are only two possible cases: (i) $x_0 \in \varphi_{\i}(K)$; (ii) $x_0 \in \varphi_{\i}(K+t)$.

  \textbf{Case (i)}: $x_0 \in \varphi_{\i}(K)$. 
  Note that $x_0 \in \varphi_{1^k}(K)$ for all $k \in \N$.
  We have $\i = 1^n$, where $n \in \N$ is the length of $\i$.
  Note that $\varphi_{\i}(K+t) \subset K$.
  There exists $\j = j_1 j_2 \ldots j_n \in \{0,1,2\}^n$ such that $\varphi_{\i}(x+t) = \varphi_{\j}(x)$.
  It follows that $\j \ne \i$, and
  \begin{equation}\label{eq:t}
    t = \lambda^{-n}\big( \varphi_{\j}(0) - \varphi_{\i}(0) \big) = \lambda^{-n} \bigg( \frac{1-\lambda}{2} \sum_{k \in \{1,2,\ldots,n\} \setminus\Lambda_{\j}} j_k\lambda^{k-1} - \zeta \sum_{k \in \{1,2,\ldots,n\} \setminus\Lambda_{\j}} \lambda^{k-1} \bigg).
  \end{equation}
  Since $0 \in K$, by (\ref{eq:G-K-3}) there exists $\i' \in \bigcup_{k=1}^\f\{0,1,2\}^k$ such that $0 \in \varphi_{\i'}\big( K \cup (K+t) \big) \subset K$.
  Note that $0$ is the minimum of $K$. Thus, we obtain that $\varphi_{\i'}(0) = 0$, which implies that $\i'=0^\ell$, where $\ell \in \N$ is the length of $\i'$.
  Note that $\varphi_{\i'}(K+t) \subset K$. Then there exists $\j'=j_1' j_2' \ldots j_\ell' \in \{0,1,2\}^\ell$ such that $\varphi_{\i'}(x+t) = \varphi_{\j'}(x)$. It follows that
  \[ t=\lambda^{-\ell} \varphi_{\j'}(0) = \lambda^{-\ell} \bigg( \frac{1-\lambda}{2} \sum_{k \in \{1,2,\ldots,\ell\} \setminus\Lambda_{\j'}} j_k'\lambda^{k-1} + \zeta \sum_{k \in \Lambda_{\j'}} \lambda^{k-1} \bigg).\]
  Together with (\ref{eq:t}), we obtain that
  \begin{align*}
    \zeta\bigg( \lambda^\ell \sum_{k \in \{1,2,\ldots,n\} \setminus\Lambda_{\j}} \lambda^{k-1} + \lambda^n \sum_{k \in \Lambda_{\j'}} \lambda^{k-1} \bigg)
    & = \frac{\lambda^\ell(1-\lambda)}{2} \sum_{k \in \{1,2,\ldots,n\} \setminus\Lambda_{\j}} j_k\lambda^{k-1} \\
    & \hspace{4em}- \frac{(1-\lambda)\lambda^n}{2} \sum_{k \in \{1,2,\ldots,\ell\} \setminus\Lambda_{\j'}} j_k'\lambda^{k-1}.
  \end{align*}
  Note that $\Lambda_{\j} \ne \{1,2,\ldots, n\}$.
  Thus, we conclude that $\zeta \in \Q(\lambda)$, a contradiction.

  \textbf{Case (ii)}: $x_0 \in \varphi_{\i}(K+t)$.
  Then there exists $\j \in \bigcup_{k=1}^\f\{0,1,2\}^k$ such that $\varphi_{\i}(x+t) = \varphi_{\j}(x)$.
  It follows that $x_0 \in \varphi_{\j}(K)$.
  This implies that $\j = 1^n$, where $n \in \N$ is the length of $\j$.
  Thus, we obtain that $\i = i_1 i_2 \ldots i_n \in \{0,1,2\}^n$, $\i \ne \j$, and
  \begin{equation}\label{eq:t-2}
    t = \lambda^{-n}\big( \varphi_{\j}(0) - \varphi_{\i}(0) \big) = \lambda^{-n} \bigg( \zeta \sum_{k \in \{1,2,\ldots,n\} \setminus\Lambda_{\i}} \lambda^{k-1}  - \frac{1-\lambda}{2} \sum_{k \in \{1,2,\ldots,n\} \setminus\Lambda_{\i}} i_k\lambda^{k-1}\bigg).
  \end{equation}
  Since $1 \in K$, by (\ref{eq:G-K-3}) there exists $\i' =i_1' i_2' \ldots i_\ell'\in \bigcup_{k=1}^\f\{0,1,2\}^k$ such that $1\in \varphi_{\i'}\big( K \cup (K+t) \big) \subset K$.
  Note that $1$ is the maximum of $K$. Thus, we obtain that $\varphi_{\i'}(1+t)=1$.
  It follows that \[ t=\lambda^{-\ell}\big( 1- \varphi_{\i'}(0)\big) -1 = \lambda^{-\ell}\bigg( 1-\frac{1-\lambda}{2} \sum_{k \in \{1,2,\ldots,\ell\} \setminus\Lambda_{\i'}} i_k'\lambda^{k-1} - \zeta \sum_{k \in \Lambda_{\i'}} \lambda^{k-1} \bigg) -1.\]
  Together with (\ref{eq:t-2}), we obtain that
  \begin{align*}
    \zeta\bigg( \lambda^\ell \sum_{k \in \{1,2,\ldots,n\} \setminus\Lambda_{\i}} \lambda^{k-1} + \lambda^n \sum_{k \in \Lambda_{\i'}} \lambda^{k-1} \bigg)
    & = \frac{\lambda^\ell(1-\lambda)}{2} \sum_{k \in \{1,2,\ldots,n\} \setminus\Lambda_{\i}} i_k\lambda^{k-1} +\lambda^n \\
    & \hspace{2em}- \frac{(1-\lambda)\lambda^n}{2} \sum_{k \in \{1,2,\ldots,\ell\} \setminus\Lambda_{\i'}} i_k'\lambda^{k-1} -\lambda^{n+\ell}.
  \end{align*}
  Note that $\Lambda_{\i} \ne \{1,2,\ldots, n\}$.
  Thus, we conclude that $\zeta \in \Q(\lambda)$, a contradiction.

  Therefore, the union $K \cup (K+t)$ is not a self-similar set for any $t >0$.
\end{proof}

\section*{Acknowledgements}
The author was supported by the National Natural Science Foundation of China (No. 12501110, 12471085) and the China Postdoctoral Science Foundation (No. 2024M763857).

\bibliographystyle{abbrv}
\bibliography{Self-Similarity}

@article {Hutchinson-1981,
    AUTHOR = {Hutchinson, John E.},
     TITLE = {Fractals and self-similarity},
   JOURNAL = {Indiana Univ. Math. J.},
  FJOURNAL = {Indiana University Mathematics Journal},
    VOLUME = {30},
      YEAR = {1981},
    NUMBER = {5},
     PAGES = {713--747},
      ISSN = {0022-2518,1943-5258},
   MRCLASS = {49F20 (00A69 28A12 58C27)},
  MRNUMBER = {625600},
MRREVIEWER = {F.\ J.\ Almgren, Jr.},
       DOI = {10.1512/iumj.1981.30.30055},
       URL = {https://doi.org/10.1512/iumj.1981.30.30055},
}

@article {KLWYZ-2024,
    AUTHOR = {Kong, Derong and Li, Wenxia and Wang, Zhiqiang and Yao,
              Yuanyuan and Zhang, Yunxiu},
     TITLE = {On the union of homogeneous symmetric {C}antor set with its
              translations},
   JOURNAL = {Math. Z.},
  FJOURNAL = {Mathematische Zeitschrift},
    VOLUME = {307},
      YEAR = {2024},
    NUMBER = {2},
     PAGES = {Paper No. 35, 20},
      ISSN = {0025-5874,1432-1823},
   MRCLASS = {28A80 (28A78)},
  MRNUMBER = {4747087},
MRREVIEWER = {Filip\ Grzegorz\ Strobin},
       DOI = {10.1007/s00209-024-03499-4},
       URL = {https://doi.org/10.1007/s00209-024-03499-4},
}

@article {Wang-2025,
    AUTHOR = {Wang, Zhiqiang},
     TITLE = {Self-embeddings of homogeneous self-similar sets generated by three maps},
   JOURNAL = {arXiv:2509.18202},
  FJOURNAL = {arXiv:2509.18202},
      YEAR = {2025},
       URL = {https://arxiv.org/abs/arXiv:2509.18202},
}

@article {Deng-Liu-2011,
    AUTHOR = {Deng, Guo-Tai and Liu, Chun-Tai},
     TITLE = {Self-similarity of unions of {C}antor sets},
   JOURNAL = {J. Math. (Wuhan)},
  FJOURNAL = {Journal of Mathematics. Shuxue Zazhi},
    VOLUME = {31},
      YEAR = {2011},
    NUMBER = {5},
     PAGES = {847--852},
      ISSN = {0255-7797},
   MRCLASS = {28A80},
  MRNUMBER = {2894996},
MRREVIEWER = {Andreas\ Schief},
}

@article {Elekes-Keleti-Mathe-2010,
    AUTHOR = {Elekes, M\'arton and Keleti, Tam\'as and M\'ath\'e, Andr\'as},
     TITLE = {Self-similar and self-affine sets: measure of the intersection
              of two copies},
   JOURNAL = {Ergodic Theory Dynam. Systems},
  FJOURNAL = {Ergodic Theory and Dynamical Systems},
    VOLUME = {30},
      YEAR = {2010},
    NUMBER = {2},
     PAGES = {399--440},
      ISSN = {0143-3857,1469-4417},
   MRCLASS = {28A80 (37C45)},
  MRNUMBER = {2599886},
MRREVIEWER = {Antti\ K\"aenm\"aki},
       DOI = {10.1017/S0143385709000121},
       URL = {https://doi.org/10.1017/S0143385709000121},
}

@article {Moreira-Xi-Zhang-2025,
    AUTHOR = {Moreira, Carlos Gustavo and Xi, Jinghua and Zhang, Yiwei},
     TITLE = {Graphs of continuous but non-affine functions are never
              self-similar},
   JOURNAL = {Proc. Amer. Math. Soc.},
  FJOURNAL = {Proceedings of the American Mathematical Society},
    VOLUME = {153},
      YEAR = {2025},
    NUMBER = {6},
     PAGES = {2501--2512},
      ISSN = {0002-9939,1088-6826},
   MRCLASS = {28A80},
  MRNUMBER = {4892623},
MRREVIEWER = {Dah-Chin\ Luor},
       DOI = {10.1090/proc/17099},
       URL = {https://doi.org/10.1090/proc/17099},
}

@article {Bandt-Kravchenko-2011,
    AUTHOR = {Bandt, Christoph and Kravchenko, Aleksey},
     TITLE = {Differentiability of fractal curves},
   JOURNAL = {Nonlinearity},
  FJOURNAL = {Nonlinearity},
    VOLUME = {24},
      YEAR = {2011},
    NUMBER = {10},
     PAGES = {2717--2728},
      ISSN = {0951-7715,1361-6544},
   MRCLASS = {28A80 (26A27)},
  MRNUMBER = {2834243},
MRREVIEWER = {Kiko\ Kawamura},
       DOI = {10.1088/0951-7715/24/10/003},
       URL = {https://doi.org/10.1088/0951-7715/24/10/003},
}

@article {Feng-Rao-Wang-2015,
    AUTHOR = {Feng, De-Jun and Rao, Hui and Wang, Yang},
     TITLE = {Self-similar subsets of the {C}antor set},
   JOURNAL = {Adv. Math.},
  FJOURNAL = {Advances in Mathematics},
    VOLUME = {281},
      YEAR = {2015},
     PAGES = {857--885},
      ISSN = {0001-8708,1090-2082},
   MRCLASS = {28A80 (11K16)},
  MRNUMBER = {3366855},
MRREVIEWER = {Eino\ Vihtori\ Rossi},
       DOI = {10.1016/j.aim.2015.06.002},
       URL = {https://doi.org/10.1016/j.aim.2015.06.002},
}

@article {Yao-Li-2012,
    AUTHOR = {Yao, Yuanyuan and Li, Wenxia},
     TITLE = {Self-similar structure on intersection of {C}artesian product
              of {C}antor triadic sets with their translations},
   JOURNAL = {Monatsh. Math.},
  FJOURNAL = {Monatshefte f\"ur Mathematik},
    VOLUME = {166},
      YEAR = {2012},
    NUMBER = {3-4},
     PAGES = {591--600},
      ISSN = {0026-9255,1436-5081},
   MRCLASS = {28A80},
  MRNUMBER = {2925157},
       DOI = {10.1007/s00605-011-0312-6},
       URL = {https://doi.org/10.1007/s00605-011-0312-6},
}

@article {Li-Yao-Zhang-2011,
    AUTHOR = {Li, Wenxia and Yao, Yuanyuan and Zhang, Yunxiu},
     TITLE = {Self-similar structure on intersection of homogeneous
              symmetric {C}antor sets},
   JOURNAL = {Math. Nachr.},
  FJOURNAL = {Mathematische Nachrichten},
    VOLUME = {284},
      YEAR = {2011},
    NUMBER = {2-3},
     PAGES = {298--316},
      ISSN = {0025-584X,1522-2616},
   MRCLASS = {28A80 (28A78)},
  MRNUMBER = {2790890},
MRREVIEWER = {Chaoshou\ Dai},
       DOI = {10.1002/mana.200710104},
       URL = {https://doi.org/10.1002/mana.200710104},
}

@article {Deng-He-Wen-2008,
    AUTHOR = {Deng, Guo-Tai and He, Xing-Gang and Wen, Zhi-Xiong},
     TITLE = {Self-similar structure on intersections of triadic {C}antor
              sets},
   JOURNAL = {J. Math. Anal. Appl.},
  FJOURNAL = {Journal of Mathematical Analysis and Applications},
    VOLUME = {337},
      YEAR = {2008},
    NUMBER = {1},
     PAGES = {617--631},
      ISSN = {0022-247X,1096-0813},
   MRCLASS = {28A80 (37C45)},
  MRNUMBER = {2356097},
MRREVIEWER = {David\ A.\ Croydon},
       DOI = {10.1016/j.jmaa.2007.03.089},
       URL = {https://doi.org/10.1016/j.jmaa.2007.03.089},
}

@article {Kong-Li-Dekking-2010,
    AUTHOR = {Kong, Derong and Li, Wenxia and Dekking, F. Michel},
     TITLE = {Intersections of homogeneous {C}antor sets and
              beta-expansions},
   JOURNAL = {Nonlinearity},
  FJOURNAL = {Nonlinearity},
    VOLUME = {23},
      YEAR = {2010},
    NUMBER = {11},
     PAGES = {2815--2834},
      ISSN = {0951-7715,1361-6544},
   MRCLASS = {28A80 (11A63 28A78)},
  MRNUMBER = {2727171},
       DOI = {10.1088/0951-7715/23/11/005},
       URL = {https://doi.org/10.1088/0951-7715/23/11/005},
}

@article {Zou-Lu-Li-2008,
    AUTHOR = {Zou, Yuru and Lu, Jian and Li, Wenxia},
     TITLE = {Self-similar structure on the intersection of
              middle-{$(1-2\beta)$} {C}antor sets with
              {$\beta\in(1/3,1/2)$}},
   JOURNAL = {Nonlinearity},
  FJOURNAL = {Nonlinearity},
    VOLUME = {21},
      YEAR = {2008},
    NUMBER = {12},
     PAGES = {2899--2910},
      ISSN = {0951-7715,1361-6544},
   MRCLASS = {28A80 (28A78)},
  MRNUMBER = {2461046},
       DOI = {10.1088/0951-7715/21/12/010},
       URL = {https://doi.org/10.1088/0951-7715/21/12/010},
}

@article {Pedersen-Phillips-2014,
    AUTHOR = {Pedersen, Steen and Phillips, Jason D.},
     TITLE = {On intersections of {C}antor sets: self-similarity},
   JOURNAL = {Commun. Math. Anal.},
  FJOURNAL = {Communications in Mathematical Analysis},
    VOLUME = {16},
      YEAR = {2014},
    NUMBER = {1},
     PAGES = {1--30},
      ISSN = {1938-9787},
   MRCLASS = {28A80},
  MRNUMBER = {3161733},
MRREVIEWER = {Xing-Gang\ He},
       URL = {http://projecteuclid.org/euclid.cma/1383587517},
}

@article{Xu-Xi-Jiang-2024,
  title = {On self-similarity of quadrangle},
  volume = {32},
  ISSN = {1793-6543},
  url = {http://dx.doi.org/10.1142/S0218348X24500968},
  DOI = {10.1142/s0218348x24500968},
  number = {05},
  PAGES = {2450096},
  journal = {Fractals},
  publisher = {World Scientific Pub Co Pte Ltd},
  author = {Xu,  Jiajun and Xi,  Jinghua and Jiang,  Kan},
  year = {2024},
}

@article{Wang-Jiang-Xi-2025,
  title = {ON SELF-SIMILARITY OF BOUNDED REGIONS IN THE PLANE},
  volume = {33},
  ISSN = {1793-6543},
  url = {http://dx.doi.org/10.1142/S0218348X25500537},
  DOI = {10.1142/s0218348x25500537},
  number = {07},
  PAGES = {2550053},
  journal = {Fractals},
  publisher = {World Scientific Pub Co Pte Ltd},
  author = {Wang, Xuemin and Jiang, Kan and Xi, Lifeng},
  year = {2025},
}

@article {Baker-Kong-2017,
    AUTHOR = {Baker, Simon and Kong, Derong},
     TITLE = {Unique expansions and intersections of {C}antor sets},
   JOURNAL = {Nonlinearity},
  FJOURNAL = {Nonlinearity},
    VOLUME = {30},
      YEAR = {2017},
    NUMBER = {4},
     PAGES = {1497--1512},
      ISSN = {0951-7715,1361-6544},
   MRCLASS = {28A80 (11K55 37B10 37B40)},
  MRNUMBER = {3636307},
       DOI = {10.1088/1361-6544/aa6078},
       URL = {https://doi.org/10.1088/1361-6544/aa6078},
}

@article {Feng-Hua-Ji-2007,
    AUTHOR = {Feng, De-Jun and Hua, Su and Ji, Yuan},
     TITLE = {When is the union of two unit intervals a self-similar set
              satisfying the open set condition?},
   JOURNAL = {Monatsh. Math.},
  FJOURNAL = {Monatshefte f\"ur Mathematik},
    VOLUME = {152},
      YEAR = {2007},
    NUMBER = {2},
     PAGES = {125--134},
      ISSN = {0026-9255,1436-5081},
   MRCLASS = {28A80 (11R06 28A75 37C70)},
  MRNUMBER = {2346429},
MRREVIEWER = {Bernd\ Sing},
       DOI = {10.1007/s00605-007-0499-8},
       URL = {https://doi.org/10.1007/s00605-007-0499-8},
}

@article {Wen-Zhao-2017,
    AUTHOR = {Wen, Zhi-Ying and Zhao, Xuan},
     TITLE = {A criterion for a finite union of intervals to be a
              self-similar set satisfying the open set condition},
   JOURNAL = {Asian J. Math.},
  FJOURNAL = {Asian Journal of Mathematics},
    VOLUME = {21},
      YEAR = {2017},
    NUMBER = {1},
     PAGES = {185--195},
      ISSN = {1093-6106,1945-0036},
   MRCLASS = {28A80 (28A75)},
  MRNUMBER = {3632440},
       DOI = {10.4310/AJM.2017.v21.n1.a6},
       URL = {https://doi.org/10.4310/AJM.2017.v21.n1.a6},
}

\end{document}